\newcommand{\Nb}{\mathbb{N}}
\newcommand{\Psf}{\mathsf{P}}
\newcommand{\Cc}{\mathcal{C}}
\newcommand{\Lc}{\mathcal{L}}
\newcommand{\Pc}{\mathcal{P}}
\renewcommand{\setminus}{\smallsetminus}
\newcommand{\dens}{\operatorname{dens}}
\newcommand{\card}[1]{\mathrm{card}\left(#1\right)}
\newcommand{\size}[1]{\mathrm{size}\left(#1\right)}
\newcommand{\s}[1]{\ensuremath{\sf{#1}}}
\DeclareMathOperator{\piooca}{\Pi^1_1\mbox{-}\s{CA}}
\DeclareMathOperator{\aca}{\s{ACA}}
\def\h{\hat}
\newcommand{\NN}{\mathbb{N}}
\newcommand{\ACA}{\aca}
\newcommand{\RCA}{\mathsf{RCA}}
\newcommand{\OVW}[2]{\mathsf{OVW}(#1,#2)}
\newcommand{\OVWD}[3]{\mathsf{OVW}^{#1}(#2,#3)}
\newcommand{\CSL}[2]{\mathsf{CSL}(#1,#2)}
\newcommand{\CSLD}[3]{\mathsf{CSL}^{#1}(#2,#3)}
\newcommand{\CDRT}[2]{\mathsf{CDRT}^{#1}(#2)}
\newcommand{\ODRT}[2]{\mathsf{ODRT}^{#1}(#2)}
\def\qt#1{``#1''}%
\DeclareMathOperator{\dom}{dom}
\newtheorem{theorem}{Theorem}[section]
\newtheorem{lemma}[theorem]{Lemma}
\newtheorem{corollary}[theorem]{Corollary}
\theoremstyle{definition}
\newtheorem{definition}[theorem]{Definition}
\newtheorem{example}[theorem]{Example}
\theoremstyle{remark}
\newtheorem{remark}[theorem]{Remark}
\newtheorem {question}[theorem]{Question}
\numberwithin{equation}{section}
\newtheoremstyle{noparens}%
  {}{}%
{}{}%
{\bfseries}{.}%
{ }%
{\thmname{#1}\thmnumber{ #2}\thmnote{ #3}}
\theoremstyle{noparens}
\newtheorem*{question*}{Question}
\newtheorem*{theorem*}{Theorem}
\newcommand{\nat}{\omega}
\newcommand{\ISig}{\mathsf{I}\Sigma^0}
\title[Carlson-Simpson's lemma and applications in reverse mathematics]{Carlson-Simpson's lemma and applications\\ in reverse mathematics}
\author{Paul-Elliot Angles d'Auriac}
\email{peada@free.fr}
\author{Lu Liu}
\address{School of Mathematics and Statistics, HNP-LAMA\\
Central South University\\
ChangSha 410083\\
People’s Republic of China}
\email{g.jiayi.liu@gmail.com}
\author{Bastien Mignoty}
\address{ENS Lyon\\46 all\'ee d'Italie\\69007 Lyon, FRANCE}
\email{bastien.mignoty@ens-lyon.fr}
\author{Ludovic Patey}
\address{CNRS, Équipe de Logique\\Universit\'e de Paris\\ Paris, FRANCE}
\email{ludovic.patey@computability.fr}
\def\msf{\mathsf}
\begin{document}

\maketitle

\begin{abstract}
We study the reverse mathematics of infinitary extensions of the Hales-Jewett theorem, due to Carlson and Simpson. These theorems have multiple applications in Ramsey's theory, such as the existence of finite big Ramsey numbers for the triangle-free graph, or the Dual Ramsey theorem. We show in particular that the Open Dual Ramsey theorem holds in $\mathsf{ACA}^{+}_0$.
\end{abstract}

\section{Introduction}\label[section]{sect_intro}

Tree partition theorems play an important role in Structural Ramsey theory, by providing a combinatorial core to which many other structural theorems can be reduced. For example, Milliken's tree theorem, states that, given an infinite finitely branching tree~$T$, for every finite coloring of the strongly embedded subtrees of length~$n$ of~$T$, there is a strongly embedded subtree of infinite height~$S$ such that all embedded subtrees of length~$n$ are monochromatic. Milliken's tree theorem is known to be the combinatorial core to prove the existence of a big Ramsey degree for colorings of the rationals and of the Rado graph (see Todorcevic~\cite{todorcevic2010introduction}).

We study the reverse mathematics of infinitary extensions of the Hales-Jewett theorem, due to Carlson and Simpson~\cite{Carlson1984dual}, and prove that they hold over~$\ACA_0$. The higher-order version of Carlson-Simpson's lemma holds in $\ACA^+_0$ and was used by Carlson and Simpson to prove a dual version of Ramsey's theorem~\cite{Carlson1984dual}. More recently, Hubi\v{c}ka~\cite{hubicka2020big} proved that the existence of a big Ramsey degree for the universal triangle-free graph followed from the higher-order version of Carlson-Simpson's lemma. This gave a simpler proof, yet less accurate, than the original proof of Dobrinen~\cite{dobrinen2020ramsey}. Thanks to our reverse mathematical analysis of Carlson-Simpson's lemma, we deduce that the existence of a big Ramsey degree for the universal triangle-free graph, and the restriction of the Dual Ramsey theorem to open sets both hold in~$\ACA^+_0$.


\subsection{Variable words}\label[section]{sect_intro-vw}

We identify a non-negative integer $k \in \omega$ with the set $\{0, \dots, k-1\}$.
A \emph{word} over a finite alphabet $A$ is a finite ordered sequence $w = \langle a_0, \dots, a_{n-1} \rangle \in A^n$ for some $n \in \omega$. An \emph{infinite word} over $A$ is a function $W : \omega \to A$. We denote by $A^{<\omega}$ and $A^\omega$ the sets of finite and infinite words over $A$, respectively. For $w = \langle a_0, \dots, a_{n-1} \rangle$, we write $|w|$ for the \emph{length} $n$ of the word~$w$ and given $i < n$, we let $w(i) = a_i$.

An \emph{$\omega$-variable word} over $A$ is an infinite word $W$ over the alphabet $A \sqcup \{ x_j : j \in \NN \}$ where each variable kind $x_j$ appears at least once and the first occurrence of $x_j$ appears before the first occurrence of $x_{j+1}$. We write $A^{\omega, \omega}$ for the set of all $\omega$-variable words over $A$.
Given an $\omega$-variable word $W$ over $A$ and a word $u$ over $A$, we write $W[u]$ for the finite word over $A$ where each occurrences of $x_j$ is replaced by $u(j)$, and cut before the first occurrence of $x_{|u|}$. In particular,  letting $\epsilon$ be the empty word, $W(\epsilon)$ is the initial segment of $w$ before the first occurence of $x_0$. The substitution notation $W[a]$ must not be confused with $W(i)$: the former notation denotes the finite word obtained by substitution of all the occurrences of~$x_0$ by the letter~$a$ and cutting before the first occurrence of~$x_1$, while the latter notation denotes the letter in $W$ at position~$i$.

\begin{example}
Then sequence $01101x_01010x_110x_0101x_20110x_001010x_1$ is a valid initial segment of an $\omega$-variable word over $2$. On the other hand, $010x_10101x_0$ is not, since the first occurrence of $x_1$ appears before the first occurrence of $x_0$. Similarly, $00191x_0101x_2$ is not a valid initial segment, since in an $\omega$-variable word over $2$, the variable $x_1$ must appear at some point, before the first occurrence of $x_2$.
If $W$ is an $\omega$-variable word over $2$ starting with $01x_010x_101x_0001x_2$, then $W[\epsilon] = 01$, $W[0] = 01010$, $W[1] = Ø110$,
$W[01] = 010101010001$ and $W[10] = 011100011001$.
\end{example}

The following theorem was proven by Carlson and Simpson~\cite[Lemma 2.4]{Carlson1984dual}:

\begin{theorem}[Carlson-Simpson Lemma\footnote{The name \qt{Carlson-Simpson Lemma} is sometimes used to refer to the Ordered Variable Word theorem in the literature of combinatorics.}]\label[theorem]{thm_csl}
For every finite alphabet $A$ and every finite partition $C_0 \sqcup \dots \sqcup C_{\ell-1} = A^{<\omega}$, there is some color $i < \ell$ and an $\omega$-variable word $W$ such that $\{ W[u] : u \in A^{<\omega} \} \subseteq C_i$.
\end{theorem}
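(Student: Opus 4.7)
The plan is to prove Theorem~\ref{thm_csl} by combining a finitary Graham--Rothschild-style theorem with a staged construction of the $\omega$-variable word and a pigeonhole over the finitely many colors.

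The first ingredient is the iterated Hales--Jewett theorem: for every finite alphabet $A$, every $k \in \omega$, and every $\ell$, there is $N = N(A, k, \ell)$ such that every $\ell$-coloring of $A^{\le N}$ admits a finite variable word $V$ of length at most $N$ with $k$ variables, for which $\{ V[u] : u \in A^k \}$ is monochromatic. This is a standard consequence of repeated Hales--Jewett and is available in $\ACA_0$.

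I would then construct a chain of finite variable words $V_0 \sqsubseteq V_1 \sqsubseteq \cdots$, where $V_n$ has exactly $n$ variables and is a committed initial segment of the eventual~$W$, maintaining the invariant that $\{ V_n[u] : u \in A^n \} \subseteq C_{i_n}$ for some color $i_n < \ell$. To pass from $V_n$ to $V_{n+1}$, I would append the first occurrence of the new variable $x_n$ and then select a ``tail'' $s^*$ over the enlarged alphabet $A \cup \{x_0, \dots, x_n\}$ so that the resulting $V_{n+1}$ is monochromatic at level $n+1$. The tool is the induced coloring $s \mapsto \bigl( c(V_n[u \uhr n] \frown u(n) \frown s[u]) \bigr)_{u \in A^{n+1}}$ on candidate tails $s$, which takes finitely many values; applying the finitary input above produces the desired monochromatic~$s^*$.

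The last issue is color consistency: the sequence $(i_n)_{n \in \omega}$ lives in $\{0, \dots, \ell-1\}$, so some color $i^*$ recurs infinitely often, but a naive subsequence extraction breaks the chain $V_0 \sqsubseteq V_1 \sqsubseteq \cdots$. This gluing problem is the main technical obstacle. I would handle it by strengthening the finite combinatorial input so that one can prescribe the target color in advance, and then running the induction along a $\Pi^0_1$ tree of partial approximations guaranteeing extensibility to the prescribed color, extracting an infinite path via weak K\"onig's lemma (or directly via $\ACA_0$). This mirrors the idempotent-ultrafilter proof of Carlson--Simpson's lemma while remaining purely combinatorial, and is presumably the effective content responsible for placing the theorem at the level of $\ACA_0$.
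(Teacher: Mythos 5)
You correctly identify the gluing problem as the central obstacle, but the resolution you sketch does not close it. No strengthening of the finitary Hales--Jewett/Graham--Rothschild input can prescribe the monochromatic color in advance: if the ambient coloring is constantly $j$, then every monochromatic finite variable word has color $j$, so any ``prescribe $i^* \neq j$'' finitary lemma is simply false. If you instead fix a candidate color $i^*$ and build the tree of finite variable words $V$ with $n$ variables such that $\{V[u]: u \in A^{\le n}\} \subseteq C_{i^*}$, there is no reason for that tree to be infinite for any particular $i^*$, and the assertion that at least one of the $\ell$ such trees is infinite is essentially \Cref{thm_csl} itself, so the argument is circular; and if ``guaranteeing extensibility'' is written into the node predicate, the tree is no longer $\Pi^0_1$. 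The idempotent-ultrafilter proof you allude to succeeds precisely because the ultrafilter does two things at once: it selects a single color at the outset \emph{and} certifies, at every subsequent stage, a large reservoir of valid continuations inside that color. Your plan has no combinatorial surrogate for that reservoir.

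That surrogate is what the paper's proof supplies, and it is the missing ingredient in your sketch. The paper first reduces \Cref{thm_csl} to the Ordered Variable Word theorem (\Cref{thm_ovw}), since ordered $\omega$-variable words are in particular $\omega$-variable words. It then proves OVW by choosing the color up front via a partition-regular largeness notion: Brown's lemma (\Cref{cor:brown-lemma}) selects an $i$ with $C_i$ piecewise syndetic (in \Cref{sect_ovw-comb-aca}), or alternatively one selects $i$ with $C_i$ of positive upper density (in \Cref{sect_ovw-aca}, via \Cref{th:density-OVW}). The construction then iterates \Cref{lem:coloring-thick-left-ps} (resp.\ \Cref{lem_technical-density}): at each stage one obtains not just one more OVW-line inside $C_i$, but also a new piecewise syndetic (resp.\ positive-density) residual set $P_{s+1}$ with the invariant that appending any word from $P_{s+1}$ keeps the growing tree inside $C_i$. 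It is the maintenance of this quantitatively large residual, not weak K\"onig's lemma over the bare chain $V_0 \sqsubseteq V_1 \sqsubseteq \cdots$, that guarantees indefinite extensibility in the single color chosen at the start. To make your approach work, you would need to replace ``prescribe the target color and hope the $\Pi^0_1$ tree is infinite'' with a largeness notion of this kind carried through the induction.
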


We write $\msf{CSL}(k,\ell)$ the statement of \cref{thm_csl} for $\ell$-colorings and alphabets of size $k$.
The Carlson-Simpson Lemma has several consequences in combinatorics, among which a dual version of Ramsey's theorem and the existence of a big Ramsey degree of the universal triangle-free graph.

Carlson and Simpson~\cite[Theorem 6.3]{Carlson1984dual} actually proved a stronger statement, known as the Ordered Variable Word theorem. An $\omega$-variable word over $A$ is \emph{ordered} if the last occurrence of $x_j$ appears before the first occurrence of $x_{j+1}$. Note that if $W$ is an ordered $\omega$-variable word, then each variable must appear finitely often, unlike in the general case. We write $A^{\omega, \omega}_{<}$ for the set of all ordered $\omega$-variable words over $A$.

\begin{theorem}[Ordered Variable Word theorem]\label[theorem]{thm_ovw}
For every finite alphabet $A$ and every finite partition $C_0 \sqcup \dots \sqcup C_{\ell-1} = A^{<\omega}$,
there is some color $i < \ell$, and an ordered $\omega$-variable word $W$ over $A$ such that $\{ W[u] : u \in A^{<\omega} \} \subseteq C_i$.
\end{theorem}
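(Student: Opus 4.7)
The plan is to derive \cref{thm_ovw} from the Carlson-Simpson Lemma by iterating it in a controlled way to build $W = V_0 V_1 V_2 \cdots$ block by block, where each $V_j$ is a finite variable word using only $x_j$. A first application of \cref{thm_csl} to $c$ furnishes a color $i < \ell$ and an $\omega$-variable word $W^{(0)}$ with $c(W^{(0)}[u]) = i$ for all $u \in A^{<\omega}$, and this $i$ will serve as the monochromatic color for the final ordered word. Throughout the construction I maintain the invariant that at stage $n$, besides the blocks $V_0, \ldots, V_{n-1}$ already chosen, an auxiliary $\omega$-variable word $W^{(n)}$ in fresh variables satisfies
$$c\bigl(V_0[u_0] \cdots V_{n-1}[u_{n-1}] \, W^{(n)}[w]\bigr) = i$$
for all $u_0, \ldots, u_{n-1} \in A$ and $w \in A^{<\omega}$.

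The inductive step proceeds in two moves. First, take $V_n$ to be the initial segment of $W^{(n)}$ up to just before the first occurrence of its second variable, renaming the first variable to $x_n$; by the invariant applied to the one-letter substitution $w = u_n$, this already yields $c(V_0[u_0] \cdots V_n[u_n]) = i$ for every $u_0, \ldots, u_n \in A$. Second, to produce $W^{(n+1)}$ in fresh variables while preserving the invariant, I re-apply \cref{thm_csl} to the product coloring $\vec c: A^{<\omega} \to \ell^{|A|^{n+1}}$ whose $(u_0, \ldots, u_n)$-component sends $w$ to $c(V_0[u_0] \cdots V_n[u_n] \, w)$. The resulting monochromatic $\omega$-variable word becomes $W^{(n+1)}$, possibly after absorbing any initial constant prefix into $V_n$, and the stage-$n$ invariant (applied to increasingly long one-coordinate extensions) forces every coordinate of the product color to equal $i$.

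The main obstacle is the interleaving of variables in $W^{(n)}$: once $V_n$ is carved from its prefix, the tail may still contain occurrences of the variable just renamed $x_n$, so that substituting a value for $x_n$ would inadvertently alter the tail and a naive concatenation would fail to give an ordered word in fresh variables. The product-coloring application of \cref{thm_csl} is precisely what decouples this, by stabilizing the color for every possible substitution $(u_0, \ldots, u_n)$ into the already-built blocks simultaneously. Once all $V_n$'s are obtained, $W = V_0 V_1 V_2 \cdots$ is an ordered $\omega$-variable word satisfying $\{W[u] : u \in A^{<\omega}\} \subseteq C_i$. Formalizing the argument in $\ACA_0$ uses arithmetic comprehension to enumerate the successive applications of \cref{thm_csl} along the construction and to form the finite product colorings at each stage.
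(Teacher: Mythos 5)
There is a genuine gap in the inductive step, and it occurs precisely at the point you flag as "the main obstacle." You note that the tail of $W^{(n)}$ may still contain occurrences of the variable just renamed $x_n$, and you claim that applying \cref{thm_csl} to the product coloring $\vec c$ "decouples" this; but the decoupling does not happen. Decompose $W^{(n)} = V_n^{\mathrm{raw}} \cdot T$, where $V_n^{\mathrm{raw}}$ is the prefix up to the first occurrence of $x_1$, and $V_n$ is $V_n^{\mathrm{raw}}$ with its variable renamed to $x_n$. For $w = u_n w''$ the substitution splits as $W^{(n)}[u_n w''] = V_n[u_n] \cdot T_{u_n}[w'']$, where $T_{u_n}$ is $T$ with $x_0$ replaced by $u_n$ and the remaining variables shifted down; crucially, $T_{u_n}$ \emph{depends on $u_n$} whenever $x_0$ recurs in $T$. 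The stage-$n$ invariant therefore tells you that the $(u_0,\dots,u_n)$-component of $\vec c$ equals $i$ only on the $u_n$-dependent set $\{T_{u_n}[w''] : w'' \in A^{<\omega}\}$. When you now apply \cref{thm_csl} to $\vec c$ afresh, the resulting monochromatic $\omega$-variable word $W^{(n+1)}$ is an arbitrary solution living in $A^{<\omega}$; there is no mechanism forcing its range $\{W^{(n+1)}[w']\}$ to lie inside (or even intersect) any of the sets $\{T_{u_n}[w'']\}$, so the monochromatic tuple can be anything. The clause "the stage-$n$ invariant forces every coordinate of the product color to equal $i$" is thus unjustified, and this is where the proof breaks.

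The difficulty you run into is not an accident: the paper explicitly notes that it is open whether $\OVW k \ell$ follows from $\CSL k \ell$ over $\RCA_0$, and the orderedness you are trying to produce is precisely what would let the tail be independent of $u_n$. The paper avoids this by \emph{not} iterating \cref{thm_csl} at all. Instead it maintains a largeness invariant: it fixes a partition-regular notion (piecewise syndeticity in \Cref{sect_ovw-comb-aca} via Brown's lemma, or positive upper density in \Cref{sect_ovw-aca} via the Dodos--Kanellopoulos--Tyros machinery), and proves a one-step lemma (\Cref{lem:coloring-thick-left-ps}, resp.\ \Cref{lem_technical-density}) saying: from a large set $P$ one can extract an OVW-line $S$ and a \emph{new large} set $Q$ with $S(0) \subseteq P$ and $S(1) \cdot Q \subseteq P$. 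Because largeness is closed under partitioning and is passed along the recursion, the same color $i$ persists automatically, and the ordered structure comes for free from concatenating one-variable blocks. Your monochromaticity-based invariant, by contrast, is not robust under removing a prefix when the underlying $\omega$-variable word is unordered, which is exactly why the paper reaches for a quantitative largeness notion rather than a purely color-theoretic one.
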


We write $\OVW k \ell$ the statement of \cref{thm_ovw} for $\ell$-colorings of words over finite alphabets of size $k$. We study the reverse mathematics of the Ordered Variable Word theorem in \Cref{sect_ovw-comb-aca} and \Cref{sect_ovw-aca}, and prove that it holds over $\ACA_0$. It is currently unknown whether $\OVW k \ell$ is strictly weaker.

\subsection{Higher-order Variable Words}\label[section]{sect_intro-ho-vw}

The same way Ramsey's theorem for $n$-tuples can be proven inductively from the pigeonhole principle, the Carlson-Simpson Lemma can be used as a pigeonhole principle to prove inductively a higher-order version coloring finite multivariable  words. An (ordered) \emph{$n$-variable word} over $A$ is a word $w$ over the alphabet $A \sqcup \{ x_j : j < n \}$ where each $x_j$ appears at least once and the first (last) occurrence of $x_j$ appears before the first occurrence of $x_{j+1}$. We call $n$ the \emph{dimension} of the $n$-variable word $w$.
Denote by $A^{<\omega, n}$ and $A^{<\omega, n}_{<}$ the sets of unordered and ordered $n$-variable words over $A$, respectively. Note that $A^{<\omega} = A^{<\omega, 0}$, and that $A^{<\omega, n} \subseteq (A \sqcup \{x_0, \dots, x_{n-1}\})^{<\omega}$.
The higher-order version of the Carlson-Simpson Lemma is about finite colorings of $n$-variable words. The \emph{order} of the theorem is the dimension of the variable words which are colored.

\begin{theorem}[Higher-order Carlson-Simpson Lemma]\label[theorem]{thm_ho-csl}
Fix $n \geq 0$ and $\ell \geq 1$.
For every finite alphabet $A$ and every finite partition $C_0 \sqcup \dots \sqcup C_{\ell-1} = A^{<\omega, n}$, 	
there is some color $i < \ell$ and an infinite $\omega$-variable word $W$ such that $\{W[u] : u \in A^{<\omega, n}\} \subseteq C_i$.
\end{theorem}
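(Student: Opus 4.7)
The plan is to proceed by induction on $n \geq 0$. The base case $n = 0$ is immediate: since $A^{<\omega, 0} = A^{<\omega}$, the statement coincides with the Carlson-Simpson Lemma (\cref{thm_csl}).

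For the inductive step, assume the theorem holds for $n$, and fix a coloring $c : A^{<\omega, n+1} \to \ell$. The key observation is that every $(n+1)$-variable word $u$ admits a unique decomposition $u = v \cdot s$, where $v \in A^{<\omega, n}$ is the longest prefix containing no occurrence of $x_n$ and $s$ is the suffix beginning with the first $x_n$. The strategy is to first homogenize $c$ in the $s$-direction using \cref{thm_csl}, then in the $v$-direction using the inductive hypothesis, and finally to combine the results via the \emph{composition} of $\omega$-variable words. Given two $\omega$-variable words $W_0$ and $W_1$, one can define $W_0 \circ W_1$ by substituting $W_1$'s content into the variable slots of $W_0$; the result is again an $\omega$-variable word and satisfies the associativity $(W_0 \circ W_1)[u] = W_0[W_1[u]]$ for all relevant $u$. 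The inductive step then produces an $\omega$-variable word $W_0$ (via \cref{thm_csl} applied to a finite derived coloring that captures the $s$-behavior of $c$), followed by a second $\omega$-variable word $W_1$ (via the inductive hypothesis applied to the coloring on $A^{<\omega, n}$ induced by $W_0$), after which $W = W_0 \circ W_1$ is verified to be monochromatic for $c$ using the associativity identity.

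The main obstacle is setting up the finite derived coloring used in the first homogenization. The space of possible suffixes $s$ is infinite, so one cannot directly record the color of $c(v \cdot s)$ for every $s$ as coordinates of a single finite-valued coloring. The natural workaround is to proceed iteratively, peeling off the $x_n$-behavior one substitution at a time and applying \cref{thm_csl} (or \cref{thm_ovw} for the ordered version) at each stage. This is enabled by the finitary fact that $W_0[u]$ for an $(n+1)$-variable word $u$ of bounded length depends only on a finite initial segment of $W_0$, so a stage-by-stage construction extracting bounded information at each stage suffices to homogenize in the limit. Once this first homogenization is correctly set up, the reduction to a coloring of $n$-variable words and the application of the inductive hypothesis is combinatorial bookkeeping, and the final verification that $c(W[u])$ is constant for all $u \in A^{<\omega, n+1}$ reduces to tracking the effect of the substitutions $W_0$ and $W_1$ on each decomposition $u = v \cdot s$.
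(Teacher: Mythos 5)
Your proposal matches the paper's proof in its essential architecture: induction on $n$, with the inductive step first "prehomogenizing" over the $x_n$-suffix direction by an $\omega$-length iterative construction, then applying the inductive hypothesis to the induced $\ell$-coloring on $A^{<\omega,n}$, and finally composing the two variable words. The paper packages the iterative peeling as the construction of a prehomogeneous $\omega$-variable word $W_\omega = \lim_m W_m$ with $W_0 \geq_{n+1} W_1 \geq_{n+2} \cdots$, where each refinement $W_m \to W_{m+1}$ handles the next prefix $s$; the finitary stability you invoke (that $W_0[u]$ depends only on a finite initial segment of $W_0$) is exactly what makes the limit exist. Your $W = W_0 \circ W_1$ corresponds to their $W_\omega[W']$, verified the same way via associativity of substitution.

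The one place your sketch needs to be made precise is the alphabet used at each stage of the peeling. The suffix $s$ (your notation) lives in $(A \sqcup \{x_0,\dots,x_n\})^{<\omega}$, not in $A^{<\omega}$, so the derived coloring $f_s(u) = f(W[s^\frown x_n^\frown u])$ must be treated as a coloring of $\hat A^{<\omega}$ with $\hat A = A \sqcup \{x_0,\dots,x_n\}$, and the version of \cref{thm_csl} invoked is $\CSL{k+n+1}{\ell}$, treating the variables as fresh letters. This is what the paper's \Cref{lem:one-step-hocsl-acap} does, and it is also the reason the resulting monochromatic variable word $U$ is over $\hat A$ rather than $A$ and must be translated back (replacing $x_m$ by a new variable $z_{j_m}$ keyed to the first occurrence of $x_m$ in $s^\frown x_n$) before composing with $W$. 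Applying \cref{thm_csl} over $A$ alone would not homogenize the suffixes that genuinely use $x_0,\dots,x_n$, so this detail is load-bearing rather than cosmetic. Finally, note that this blow-up of the alphabet is precisely what forces you into the unordered Carlson–Simpson statement even if you start from \cref{thm_ovw}: once the variables are treated as letters they recur infinitely, so the parenthetical "(or \cref{thm_ovw} for the ordered version)" does not survive the inductive step — the higher-order ordered statement requires a genuinely different proof.
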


We write $\CSLD n k \ell$ the statement of \cref{thm_ho-csl} for $\ell$-colorings of $n$-variable words over finite alphabets of size $k$. In particular, $\CSLD 0 k \ell$ is the statement $\CSL k \ell$.
We study the reverse mathematics of the Higher-order Carlson-Simpson Lemma in \Cref{sect-ho-csl-acap} and prove that it holds over $\ACA_0^{+}$.

The Ordered Variable Word theorem also admits a higher-order counterpart, due to Carlson.
The two last theorems of this introduction will not be studied in this paper, but we state them for the sake of completion:

\begin{theorem}[Higher-order Ordered Variable Word theorem]\label[theorem]{thm_ho-ovw}
Fix $n \geq 0$ and $\ell \geq 1$.
For every finite alphabet $A$ and every finite partition $C_0 \sqcup \dots \sqcup C_{\ell-1} = A^{<\omega, n}_{<}$, 	
there is some color $i < \ell$ and an infinite ordered $\omega$-variable word $W$ such that $\{W[u] : u \in A^{<\omega, n}\} \subseteq C_i$.
\end{theorem}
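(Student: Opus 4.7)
My plan is to induct on the dimension $n$, using the Ordered Variable Word theorem (\Cref{thm_ovw}) as both the base case and the pigeonhole step. This mirrors the strategy that derives the Higher-order Carlson--Simpson Lemma (\Cref{thm_ho-csl}) from its first-order version (\Cref{thm_csl}), as developed in \Cref{sect-ho-csl-acap}, only with every variable word replaced by its ordered counterpart.

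The base case $n = 0$ is literally $\OVW k \ell$. For the inductive step, fix $n \geq 0$ and assume the theorem at dimension $n$. Fix $\ell \geq 1$, a finite alphabet $A$, and a coloring $c : A^{<\omega, n+1}_{<} \to \ell$. Every ordered $(n+1)$-variable word $u$ admits a unique decomposition $u = v \cdot w$, where $v \in A^{<\omega, n}_{<}$ is the initial segment of $u$ stopping just before the first occurrence of $x_n$, and $w \in (A \cup \{x_n\})^{<\omega}$ contains $x_n$ at least once. Each such $v$ therefore induces an auxiliary coloring $c_v : w \mapsto c(v \cdot w)$ on words over $A \cup \{x_n\}$ containing $x_n$.

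First, I would construct an ordered $\omega$-variable word $W_0$, block by block, so that the restriction of each $c_v$ to substitutions made entirely within the $x_n$-tail of $W_0$ becomes constant, yielding a well-defined coloring $\bar{c} : A^{<\omega, n}_{<} \to \ell$. Concretely, at stage $s$ the $s$-th block of $W_0$ is chosen by an application of $\OVW{|A|+1}\ell$ on the alphabet $A \cup \{x_n\}$ applied simultaneously to the finitely many auxiliary colorings $c_v$ corresponding to $v$'s of size at most $s$ already realisable inside the portion of $W_0$ built so far. Diagonalising along $s$ produces both $W_0$ and the coloring $\bar{c}$. Second, applying the inductive hypothesis at dimension $n$ to $\bar{c}$ yields an ordered $\omega$-variable word $\widetilde{W}$ monochromatic for $\bar{c}$, of some color $i < \ell$. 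Plugging $\widetilde{W}$ into the lower-$n$-variable skeleton of $W_0$ then produces an ordered $\omega$-variable word $W$ with $c(W[u]) = i$ for every $u \in A^{<\omega, n+1}_{<}$.

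The main obstacle is the diagonal bookkeeping in the first step: infinitely many auxiliary colorings $c_v$ must be simultaneously stabilised, and each application of $\OVW$ only controls finitely many of them, requiring an $\omega$-step iteration and a compactness-style extraction argument to define $\bar{c}$ uniformly. This is precisely where the strength of $\ACA^+_0$ is expected to enter, via access to the $\omega$-jump. The reverse-mathematical accounting should then run in parallel with the treatment of $\CSLD n k \ell$ in \Cref{sect-ho-csl-acap}, insisting on orderedness throughout.
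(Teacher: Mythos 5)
There is a fundamental problem with the strategy, and in fact the paper explicitly warns against it. Theorem~\ref{thm_ho-ovw} is stated in the introduction purely for context: the paper says ``the two last theorems of this introduction will not be studied in this paper,'' and, crucially, ``$\OVWD n k \ell$ cannot be proven by iterating its zero-dimensional version as in the case of the Carlson-Simpson Lemma.'' Your proposal is exactly such an iteration, and it breaks down at the point where you treat $x_n$ as a letter of the extended alphabet $A \cup \{x_n\}$ and apply $\OVW{|A|+1}{\ell}$. The solution $U$ returned by that application is an ordered $\omega$-variable word over $A \cup \{x_n\}$, but orderedness only constrains the \emph{variables} of $U$; the \emph{letter} $x_n$ may occur at arbitrary, scattered positions of $U$, and indeed may occur infinitely often, since letters of the base alphabet are unconstrained. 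When you reassemble the blocks into $W_0$ and try to reinterpret those $x_n$'s as genuine occurrences of a variable of a word over~$A$ (so that $\widetilde W$ can be ``plugged into the lower-$n$-variable skeleton''), you obtain a word in which $x_n$, and more generally the low-index variables injected at earlier stages, reappear interleaved with higher-index ones and possibly infinitely often. The result is therefore an (unordered) $\omega$-variable word, not an ordered one; you recover only a solution to $\CSLD{n+1}{k}{\ell}$, not $\OVWD{n+1}{k}{\ell}$. This is precisely the phenomenon the paper describes in \Cref{sect-ho-csl-acap}: ``even using the Ordered Variable Word as the base statement\ldots the resulting higher-order version yields only a solution to Carlson-Simpson's lemma.''

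There is no fix within this framework: the diagonal bookkeeping, compactness, and appeal to $\ACA^+_0$ that you anticipate do not address the obstruction, which is combinatorial rather than logical. The only known proofs of the higher-order Ordered Variable Word theorem pass through Carlson's theorem, whose two known proofs use third-order objects (ultrafilters, or the Ellis enveloping semigroup); its status in reverse mathematics is open, and no second-order proof was offered in this paper.
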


We write $\OVWD n k \ell$ the statement of \cref{thm_ho-ovw} for $\ell$-colorings of ordered $n$-variable words over finite alphabets of size $k$. Here again, $\OVWD 0 k \ell$ is the statement $\OVW k \ell$.
However, $\OVWD n k \ell$  cannot be proven by iterating its zero-dimensional version as in the case of the Carlson-Simpson Lemma. \Cref{thm_ho-ovw} follows from a stronger theorem known as Carlson's theorem. $\OVWD n k \ell$ and Carlson's theorem have not been studied so far in reverse mathematics, as the only two known proofs of Carlson's theorem involve third-order objects, namely, ultrafilters in combinatorics, and Ellis enveloping semigroup in topological dynamics.

The Higher-order Ordered Variable Word theorem of dimension 1 for unary alphabets is actually equivalent to Hindman's theorem over~$\RCA_0$. Given a set $X \subseteq \omega$, we write $\mathsf{FS}(X)$ for the set of all non-empty sums over $X$ with no repetitions, that is, $\mathsf{FS}(X) = \{  \sum F : F \subseteq_{\mathtt{fin}} X \wedge F \neq \emptyset \}$.

\begin{theorem}[Hindman's theorem]
For every finite partition $C_0 \sqcup \dots \sqcup C_{\ell-1} = \omega$;
there is some color $i < \ell$ and an infinite set $X \subseteq \omega$ such that $\mathsf{FS}(X) \subseteq C_i$.
\end{theorem}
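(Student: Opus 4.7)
The plan is to deduce Hindman's theorem from the Higher-order Ordered Variable Word theorem of dimension $1$ over the unary alphabet, that is, from $\OVWD{1}{1}{\ell}$; the converse encoding referred to in the preceding sentence is symmetric. Fix $A = \{a\}$ and a partition $C_0 \sqcup \cdots \sqcup C_{\ell-1} = \omega$. First I pull the partition back to ordered $1$-variable words over $A$ by coloring each $v \in A^{<\omega, 1}_{<}$ according to the color of the positive integer counting its occurrences of $x_0$. Applying $\OVWD{1}{1}{\ell}$ to this coloring yields a color $i < \ell$ and an ordered $\omega$-variable word $W$ over $A$ such that the number of occurrences of $x_0$ in $W[u]$ lies in $C_i$ for every $u \in A^{<\omega, 1}$.

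Since $W$ is ordered and written over a single letter, all occurrences of $x_j$ in $W$ lie strictly between the last occurrence of $x_{j-1}$ and the first occurrence of $x_{j+1}$; let $p_j \geq 1$ denote the total number of occurrences of $x_j$ in $W$. For a nonempty finite $I \subseteq \omega$, build $u_I \in A^{<\omega, 1}$ of length $\max(I) + 1$ by setting $u_I(j) = x_0$ when $j \in I$ and $u_I(j) = a$ otherwise. The substitution $W[u_I]$ truncates $W$ just before the first occurrence of $x_{|u_I|}$ and replaces each $x_j$ by $u_I(j)$; hence each $x_j$ with $j \in I$ contributes $p_j$ new occurrences of $x_0$, while those with $j \notin I$ contribute none. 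The $x_0$-count of $W[u_I]$ is therefore $\sum_{j \in I} p_j$, giving $\sum_{j \in I} p_j \in C_i$ for every nonempty finite $I \subseteq \omega$.

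From this ``all finite sub-sequence sums of $(p_j)_j$ lie in $C_i$'' property I extract an infinite $X \subseteq \omega$ with $\mathsf{FS}(X) \subseteq C_i$. The delicate point is that the $p_j$'s may repeat, so $\{p_j : j \in \omega\}$ need not itself be infinite. To sidestep this, I recursively define $0 = k_0 < k_1 < k_2 < \cdots$ so that the block sums $y_t := \sum_{k_t \leq m < k_{t+1}} p_m$ are strictly increasing: at step $t$, pick $k_{t+1}$ large enough---possible because $p_m \geq 1$ forces the tail sums of $(p_m)_m$ to diverge---that $y_t > y_0 + \cdots + y_{t-1}$. Setting $X = \{y_t : t \in \omega\}$, every nonempty finite sum $\sum_{t \in F} y_t$ rewrites as $\sum_{m \in I_F} p_m$ with $I_F = \bigsqcup_{t \in F} [k_t, k_{t+1})$ a nonempty finite subset of $\omega$; hence $\mathsf{FS}(X) \subseteq C_i$.

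The only step worth flagging as a potential obstacle is checking that the extraction runs inside $\mathsf{RCA}_0$: the recursion defining $(k_t, y_t)_t$ amounts to bounded search over the computable sequence $(p_j)_j$ together with $\Sigma^0_1$ induction, which fits comfortably inside $\mathsf{RCA}_0$. All substantive combinatorial work is thereby packaged into $\OVWD{1}{1}{\ell}$, whose reverse-mathematical strength is located by the equivalence mentioned in the paper at precisely that of Hindman's theorem.
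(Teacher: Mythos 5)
The paper never proves Hindman's theorem; it is stated purely as background, with a remark that it is provable in $\ACA_0^{+}$, implies $\ACA_0$, and has open exact strength. There is thus no paper proof to compare your argument against. What the paper asserts (also without proof) is an $\RCA_0$-equivalence between Hindman's theorem and $\OVWD{1}{1}{\ell}$, and your argument is a correct proof of one direction of that equivalence: for an ordered $\omega$-variable word $W$ over a unary alphabet, each $x_j$ occurs finitely often and entirely before $x_{j+1}$, so the count of $x_0$ in $W[u_I]$ is indeed $\sum_{j\in I} p_j$; and the regrouping of $(p_j)_j$ into consecutive blocks with strictly increasing sums $(y_t)_t$, whose finite sums are exactly the sums $\sum_{m\in I_F}p_m$ over unions of blocks, is a bounded $\Sigma^0_1$ recursion available in $\RCA_0$. (Strict monotonicity of the $y_t$'s would already suffice for $X$ to be infinite; the super-increasing condition is harmless overkill.)

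The genuine gap is that this is a reduction, not a proof. You are not entitled to $\OVWD{1}{1}{\ell}$ as a starting point: the paper explicitly says that $\OVWD{n}{k}{\ell}$ and Carlson's theorem have not been studied in reverse mathematics, and that the only known proofs of Carlson's theorem go through third-order objects (ultrafilters or the Ellis enveloping semigroup). You even note in your last paragraph that $\OVWD{1}{1}{\ell}$ has precisely the strength of Hindman's theorem, which makes the deduction circular as a proof of the latter. To actually establish Hindman's theorem one needs either a direct argument (e.g.\ Baumgartner's elementary proof, or the Galvin--Glazer idempotent-ultrafilter proof, neither of which is in the paper) or a freestanding proof of $\OVWD{1}{1}{\ell}$ from weaker principles, and no such proof is supplied here or in the paper.
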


There exist several proofs of Hindman's theorem, which was extensively studied in reverse mathematics. Hindman's theorem is provable in $\ACA_0^{+}$ and implies $\ACA_0$. The exact strength of Hindman's theorem is one of the most important open question in reverse mathematics.

\subsection{Organization of the paper}

In \Cref{sect_notation}, we fix some notation and definitions which will be useful all along the paper.
Then, in \Cref{sect_ovw-comb-aca} and \Cref{sect_ovw-aca}, we give two proofs of the Ordered Variable Word theorem in~$\ACA_0$, based on two different largeness notions: piecewise syndeticity and positive upper density. In \Cref{sect-ho-csl-acap}, we iterate either proof of the Ordered Variable Word in~$\ACA_0$ to obtain a proof of the Higher-order Carlson-Simpson Lemma in~$\ACA^+_0$. Last, we explore two applications of the Higher-order Carlson-Simpson Lemma, namely, the Dual Ramsey Theorem for open sets in \Cref{sect-cdrt-acap} and the existence of a big Ramsey degree of the universal triangle-free graph in \Cref{sect_henson-graph}. Both consequences are shown to hold over~$\ACA^+_0$.


\section{Notation and background}\label[section]{sect_notation}



In the introduction, we stated the Ordered Variable Word theorem and the Carlson-Simpson Lemma in terms of variable words. However, it is sometimes more convenient 
to consider the set of words obtained by taking all the possible instantiations of a variable word.

\begin{definition}[Ordered Variable Word tree]
  An \emph{OVW-tree} over $A$ of dimension $n \in \omega \cup \{\omega\}$
  is a set of the form $T = \{ w[u] : u \in A^{<n} \}$
  for some ordered $n$-variable word $w$ over $A$.
  We call $w$ its \emph{generating variable word}.
  An \emph{OVW-line} is an OVW-tree of dimension 1.
\end{definition}

We write $T(j)=\{w[u] : u \in A^{j} \}$ for the \emph{$j$-th level of $T$}, $\Lc(T) = \{|u|:u\in T\}$ for the \emph{set of levels of $T$} and $|T|=\max(\Lc(T))$ for the \emph{size of $T$}. Note that the size of $T$ is different from the cardinality of $T$ as a set, and from the dimension of $T$ as well. The size of $T$ coincides with the length of its generating variable word. An \emph{OVW-subtree} of $T$ is an OVW-tree which is a subset of $T$. 

\begin{example}
For any $c \in A^{<\omega}$, $\{c\}$ is an OVW-tree of dimension $0$.
The OVW-trees of dimension 1 are the sets of the form $T = \{c\} \cup \{ c^\frown a^\frown w[a] : a \in A \}$ for some variable word $w$ over $A$. Say $A = \{0,1\}$, $c = 10$ and $w = 01x_0 10$. Then $T = \{10, 10001010, 10101110\}$, $T(0) = \{10\}$ and $T(1) = \{10001010, 10101110\}$. 
On the other hand, $S = \{10, 1010, 1001\}$ is not an OVW-tree.
\end{example}

It is easy to see that there is a one-to-one correspondence between OVW-trees and their generating variable words.
The tree presentation is especially convenient when dealing with iterations.
The Ordered Variable Word theorem can be stated in terms of OVW-trees as follows:

\begin{theorem}[Ordered Variable Word theorem]\label[theorem]{thm_ovw-tree-iterable}
For every finite alphabet $A$, every OVW-tree $T \subseteq A^{<\omega}$ over $A$ of dimension $\omega$ and every finite partition $C_0 \sqcup \dots \sqcup C_{\ell-1} = T$,
there is some color $i < \ell$ and an OVW-subtree $S \subseteq T$ of dimension $\omega$ such that  $S \subseteq C_i$.
\end{theorem}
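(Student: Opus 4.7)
The plan is to derive this tree-reformulation directly from \Cref{thm_ovw} by pulling the partition back along the generating ordered $\omega$-variable word of $T$ and then pushing the resulting monochromatic ordered $\omega$-variable word forward into $T$ by a substitution (composition) operation on variable words.

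First I would write $T = \{w[u] : u \in A^{<\omega}\}$, where $w \in A^{\omega,\omega}_{<}$ is the generator of $T$, noting that $u \mapsto w[u]$ is a bijection from $A^{<\omega}$ onto $T$. Pull back the given partition $C_0 \sqcup \dots \sqcup C_{\ell-1} = T$ to the partition $D_0 \sqcup \dots \sqcup D_{\ell-1} = A^{<\omega}$ defined by $u \in D_i \iff w[u] \in C_i$. Apply \Cref{thm_ovw} to obtain a color $i < \ell$ and an ordered $\omega$-variable word $w' \in A^{\omega,\omega}_{<}$ such that $w'[u] \in D_i$ for every $u \in A^{<\omega}$; equivalently, $w[w'[u]] \in C_i$.

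It remains to build an ordered $\omega$-variable word $w'' \in A^{\omega,\omega}_{<}$ whose substitutions realise exactly the set $\{w[w'[u]] : u \in A^{<\omega}\}$. Define $w''$ position by position: at any position where $w$ carries a letter $a \in A$, put $a$ in $w''$; at any position where $w$ carries the variable $x_j$, put $w'(j)$ (which is either a letter of $A$ or some variable $x_k$). A straightforward induction on $|u|$ gives $w''[u] = w[w'[u]]$, so letting $S := \{w''[u] : u \in A^{<\omega}\}$ produces an OVW-subtree of $T$ entirely contained in $C_i$, as required.

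The only non-trivial verification is that $w''$ actually belongs to $A^{\omega,\omega}_{<}$. Each variable $x_k$ occurs in $w'$ at some index $j$, and $x_j$ occurs in $w$, so $x_k$ appears at least once in $w''$. For the ordering condition, suppose $x_k$ appears at some position $p$ of $w''$, coming from $w(p) = x_j$ and $w'(j) = x_k$, and $x_{k+1}$ appears at some position $p'$, coming from $w(p') = x_{j'}$ and $w'(j') = x_{k+1}$. The ordered condition on $w'$ forces $j < j'$, and then the ordered condition on $w$ forces $p < p'$. The main obstacle is really only this bookkeeping: ordered variable words may interleave the same variable with alphabet letters, so one must track positions carefully enough to be sure that the composition $w \circ w'$ is well-defined and preserves orderedness; the argument above says this comes for free from the two one-dimensional ordering constraints.
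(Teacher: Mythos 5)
Your proposal is correct and follows essentially the same route as the paper: pull the partition on $T$ back to a partition of $A^{<\omega}$ via the generating ordered variable word $w$, apply \Cref{thm_ovw}, and push the resulting monochromatic word forward by composition. The paper delegates the forward step to the phrase ``canonical computable isomorphism'' and simply asserts that the image is an OVW-subtree; you supply the bookkeeping it omits, namely the explicit construction of $w''$ and the check that orderedness of $w$ and $w'$ jointly give orderedness of $w''$, which is exactly the right verification.
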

\begin{proof}
Let $f : T \to A^{<\omega}$ be the canonical computable isomorphism. Define $D_0 \sqcup \dots \sqcup D_{\ell-1} = A^{<\omega}$ by $D_i = \{ u \in A^{<\omega} : f(u) \in C_i \}$.
By \cref{thm_ovw}, there is an ordered $\omega$-variable word $W$ over $A$ and a color $i < \ell$ such that $\{ W[u] : u \in A^{<\omega} \} \subseteq D_i$. In particular, $S = \{ f^{-1}(W[u]) : u \in A^{<\omega} \}$ is an OVW-subtree of $T$ such that $S \subseteq C_i$.
\end{proof}

In some occasions, it will also be convenient to see an ordered $n$-variable word as a finite sequence $\sigma, w_0, w_1, \dots, w_n$ where~$\sigma \in A^{<\omega}$ is a word over $A$, and $w_i$ are \emph{left 1-variable words} over~$A$, that is, 1-variable words such that the variable occurs first at position~0. There is again a one-to-one correspondence between OVW-trees of dimension~$n$ and sequences of this form.

\subsection{Largeness and partition regularity}

Partition theorems are often refined in terms of large sets which are partition regular. 
These refinements can be seen as quantitative versions of these theorems. As it happens, the proofs of the refined versions are sometimes more elementary from a reverse mathematical viewpoint, although combinatorially more complicated.

\begin{definition}
A class~$\Cc \subseteq \Pc(A^{<\nat})$ is \emph{partition regular} if
\begin{enumerate}
	\item[(1)] it is non-empty
	\item[(2)] if $B \in \Cc$ and $B \subseteq C$, then $C \in \Cc$
	\item[(3)] if $B \in \Cc$ and $C_0 \sqcup C_1 = B$, then either~$C_0 \in \Cc$ or $C_1 \in Cc$
\end{enumerate}
\end{definition}

We shall consider two refinements of the Ordered Variable Words, based on two standard partition regular notions : positive upper density and piecewise syndeticity.

\subsubsection{Positive upper density}

\begin{definition}
We define the density of a set $D$ inside a set $U$ as $\dens_U(D) = \frac{\card {D\cap U}}{\card U}$.
A set~$D \subseteq A^{<\nat}$ has \emph{positive upper density} if 
$$
\limsup_{n \to \infty} \dens_{A^n}(D) > 0
$$
\end{definition}

It is clear that $A^{<\nat}$ has positive upper density, and that if~$D$ has positive upper density, then so have its supersets. Thus, properties (1) and (2) of the definition of partition regularity are satisfied for the class of all sets of positive upper density. The following lemma shows that property (3) is also satisfied, hence the class is partition regular.

\begin{lemma}[$\RCA_0$]
Suppose~$D \subseteq A^{<\nat}$ has positive upper density, and $E \sqcup F = D$.
Then either~$E$ or $F$ has positive upper density.
\end{lemma}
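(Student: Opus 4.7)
The plan is to prove the statement by contrapositive through a density-splitting argument, working directly with the definition of positive upper density in $\RCA_0$. The key algebraic identity is that $D \cap A^n = (E \cap A^n) \sqcup (F \cap A^n)$, which yields
\[
\dens_{A^n}(D) \;=\; \dens_{A^n}(E) + \dens_{A^n}(F)
\]
for every $n$. So whenever $\dens_{A^n}(D)$ is bounded below by some positive rational, at least one of $\dens_{A^n}(E), \dens_{A^n}(F)$ is bounded below by half of it.

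Concretely, I would unwind $\limsup_n \dens_{A^n}(D) > 0$ as: there exists a rational $q > 0$ such that the set $I = \{ n : \dens_{A^n}(D) \geq q \}$ is infinite (this set exists as a $\Delta^0_1$ set from $D$, hence is available in $\RCA_0$). For every $n \in I$, by the identity above, $\dens_{A^n}(E) \geq q/2$ or $\dens_{A^n}(F) \geq q/2$. Split $I$ accordingly into two $\Delta^0_1$ subsets $I_E$ and $I_F$ whose union is $I$; by the infinite pigeonhole principle for two colors (which is $\mathsf{RT}^1_2$, provable in $\RCA_0$), one of $I_E$ or $I_F$ is infinite. In the first case, $\dens_{A^n}(E) \geq q/2$ for infinitely many $n$, so $E$ has positive upper density; symmetrically for $F$.

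I do not expect any real obstacle: the argument is essentially a two-line calculation plus the infinite pigeonhole for two parts. The only point requiring minor care is making sure the sets $I, I_E, I_F$ are formed computably from $D, E, F$ so as to stay inside $\RCA_0$, but since densities along levels $A^n$ are given by finite counts, each of these sets is $\Delta^0_1$-definable, and $\mathsf{RT}^1_2$ suffices to conclude.
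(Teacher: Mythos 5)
Your proposal is correct and follows essentially the same route as the paper's proof: both rest on the additivity identity $\dens_{A^n}(D)=\dens_{A^n}(E)+\dens_{A^n}(F)$ applied on an infinite set of levels where $D$ has density bounded below. The only cosmetic difference is that the paper argues contrapositively (if $E$ fails, the set of levels where $E$ exceeds $q/2$ is finite, so its complement inside the good levels is still infinite and there $F$ exceeds $q/2$), whereas you phrase it as a direct two-coloring and invoke $\mathsf{RT}^1_2$; both are standard moves available in $\RCA_0$.
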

\begin{proof}
Let~$\epsilon > 0$ be such that the set $B = \{ n : \dens_{A^n}(D) > \epsilon \}$ is infinite.
Suppose that~$E$ does not have positive upper density. Then the set~$C = \{ n : \dens_{A^n}(E) > \epsilon/2 \}$ is finite.
It follows that the set~$B \setminus C$ is infinite.
Note that $\forall n \in B \setminus C$, $\dens_{A^n}(D) = \dens_{A^n}(E) + \dens_{A^n}(F) > \epsilon$ and $\dens_{A^n}(E) \leq \epsilon/2$, so $\dens_{A^n}(F) > \epsilon/2$.
It follows that $\limsup_{n \to \infty}  \dens_{A^n}(F) > \epsilon/2$.
\end{proof}

We will prove in \Cref{sect_ovw-aca} that every set of positive upper density admits a solution to the Ordered Variable Word theorem.

\subsubsection{Thickness and syndeticity}
Given a set~$F \subseteq A^{<\nat}$ and a word~$\sigma \in A^{<\nat}$,
we let $F \cdot \sigma = \{ \tau\sigma : \tau \in F \}$.

\begin{definition}
A set~$S \subseteq A^{<\nat}$ is \emph{syndetic} if there is some~$\ell$ such that for every~$\sigma \in A^{<\nat}$, there is some~$\tau \in A^{\leq \ell}$ such that $\tau\sigma \in S$. For a given $\ell$ we call such a set $\ell$-syndetic.
A set~$T \subseteq A^{<\nat}$ is \emph{thick} if 
for each~$\ell \in \nat$, there is some~$\sigma \in A^{<\nat}$ such that $A^{\leq \ell} \cdot \sigma \subseteq T$.
A set~$P \subseteq A^{<\nat}$ is \emph{piecewise syndetic} if it is the intersection of a thick set and a syndetic set.
\end{definition}

Thickness and syndeticity are not partition regular notions, as they are not closed under partitioning. They play a dual role, in that a set is thick if and only if it intersects every syndetic set. We shall see on the other hand that piecewise syndeticity is partition regular. 

\begin{lemma}[$\RCA_0$]\label[lemma]{lem:brown-lemma}
Let~$P \subseteq A^{<\nat}$ be a piecewise syndetic set, and let~$P = B \sqcup C$, then either $B$ or $C$ is piecewise syndetic.
\end{lemma}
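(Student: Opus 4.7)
The plan is to argue by contrapositive: assuming $C$ is not piecewise syndetic, I would show that $B$ must be. The conceptual tool I would establish first is the following equivalent characterization of piecewise syndeticity: a set $X \subseteq A^{<\nat}$ is piecewise syndetic if and only if there is some $k' \in \nat$ such that the $k'$-neighborhood
\[
E_{k'}(X) := \{\sigma \in A^{<\nat} : A^{\leq k'} \cdot \sigma \cap X \neq \emptyset\}
\]
is thick. The forward direction writes $X = T \cap S$ with $S$ being $k'$-syndetic and observes that every $(\ell + k')$-ball inside $T$ lifts to an $\ell$-ball inside $E_{k'}(X)$; the converse recovers $X$ as the intersection of the thick set $E_{k'}(X)$ with the $k'$-syndetic set $X \cup E_{k'}(X)^c$ (using that $\epsilon \in A^{\leq k'}$, so $X \subseteq E_{k'}(X)$).

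Armed with this, write $P = T \cap S$ with $T$ thick and $S$ being $k$-syndetic, and assume $C$ is not piecewise syndetic. Specialising the negation of the characterization to $k' = k$ yields some $\ell^* \in \nat$ with the property that for every $\sigma \in A^{<\nat}$ there is $\tau_1 \in A^{\leq \ell^*}$ satisfying $A^{\leq k} \cdot \tau_1 \sigma \cap C = \emptyset$. Intuitively, after at most $\ell^*$ prefix-shifts starting from any word, we reach a position where $C$ is entirely absent from the surrounding $k$-neighborhood.

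The heart of the argument is then to show that $E_{k + \ell^*}(B)$ is thick. Given $m \in \nat$, thickness of $T$ yields $\sigma^*$ with $A^{\leq m + k + \ell^*} \cdot \sigma^* \subseteq T$. For arbitrary $\sigma \in A^{\leq m} \cdot \sigma^*$, the $C$-avoidance property supplies $\tau_1 \in A^{\leq \ell^*}$ with $A^{\leq k} \cdot \tau_1 \sigma \cap C = \emptyset$, and then $k$-syndeticity of $S$ supplies $\tau_2 \in A^{\leq k}$ with $\tau_2 \tau_1 \sigma \in S$. The key observation is that $\tau_2 \tau_1 \sigma \in A^{\leq m + \ell^* + k} \cdot \sigma^* \subseteq T$, so it lies in $T \cap S = P = B \sqcup C$; it cannot lie in $C$ by the choice of $\tau_1$, hence it lies in $B$. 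This certifies $\sigma \in E_{k + \ell^*}(B)$. Since $m$ was arbitrary, $E_{k + \ell^*}(B)$ is thick, and by the characterization $B$ is piecewise syndetic.

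The main obstacle is really just quantifier bookkeeping: keeping the three scales $m$, $\ell^*$, and $k$ compatible so that every prefix shift stays inside the thick ball supplied by $T$. Introducing the neighborhood notation $E_{k'}$ isolates the only substantive combinatorial content---the equivalence between piecewise syndeticity of $X$ and thickness of $E_{k'}(X)$---and reduces the rest of the argument to routine concatenation arithmetic, comfortably formalisable in $\RCA_0$.
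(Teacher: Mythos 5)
Your proof is correct, but it takes a genuinely different route from the paper's.

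The paper argues by a short set-theoretic identity: writing $P = S \cap T$ with $S$ syndetic and $T$ thick, it defines $\tilde S := B \cup (S \setminus P)$ and verifies that $B = \tilde S \cap T$ and $C = (A^{<\omega}\setminus\tilde S) \cap S$. Then it is a dichotomy on $\tilde S$: if $\tilde S$ is syndetic, $B$ is piecewise syndetic; if not, its complement is thick and $C$ is piecewise syndetic. The whole proof is a single clever reshuffling of the decomposition, with the thick/syndetic duality invoked once at the end.

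You instead isolate and prove an equivalent characterization of piecewise syndeticity --- that $X$ is piecewise syndetic iff some bounded neighborhood $E_{k'}(X)$ is thick --- and then run a direct, quantitative argument: from the failure of thickness for $E_k(C)$ at some scale $\ell^*$, and the syndeticity of $S$ at scale $k$, you manufacture a thick set of witnesses inside $E_{k+\ell^*}(B)$. Both routes are valid in $\RCA_0$. The paper's argument is shorter and does not require tracking numerical bounds; yours is more constructive, makes the interplay of scales explicit, and produces a reusable characterization (thickness of a bounded neighborhood) that is standard elsewhere and could be of independent use in the rest of the section. The one place to be careful in your write-up is the converse direction of your characterization: you need $X \subseteq E_{k'}(X)$ (taking $\tau = \epsilon$) and the case split on whether $\sigma \in E_{k'}(X)$ to see that $X \cup E_{k'}(X)^{c}$ is $k'$-syndetic --- you noted both of these, so the argument is complete.
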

\begin{proof}
Assume that $P=S\cap T$, where $S$ is syndetic and $T$ is thick, and let $P = B \sqcup C$. Let $\tilde S:=B\cup(S\setminus P)$. First, note that $B \subseteq \tilde S$ and $B \subseteq P \subseteq T$, so $B \subseteq \tilde S \cap T$. Also note that $\tilde S \cap T \subseteq B \cup ((S \setminus P) \cap T) \subseteq B$, so $B=\tilde S\cap T$. It follows that if $\tilde S$ is syndetic, then $B$ is piecewise syndetic, and the proof is finished. But if, on the contrary, $\tilde S$ is not syndetic, then $\tilde T:=A^{<\omega}\setminus\tilde S$ is thick. Finally observe that $C= P \setminus B = (S \setminus B) \cap (S \cap P) = (S \setminus B) \cap (S \setminus (S \setminus P)) = S \setminus \tilde{S} = (A^{<\omega} \setminus \tilde{S}) \cap S = \tilde T\cap S$, which shows that in this case $C$ is piecewise syndetic.
\end{proof}

We will actually need the following iterated version of the previous lemma, which is a generalization of the so-called Brown's lemma. Brown's lemma was originally proved for locally finite semigroups. Frittaion~\cite{frittaion2017browns} studied it from a reverse mathematical viewpoint for the semigroup~$(\mathbb{N}, +)$ and showed that it is equivalent to $\ISig_2$ over~$\RCA_0$. The following proof is essentially the same, recasted in the setting of the semigroup $(A^{<\omega}, \cdot)$.

\begin{lemma}[Brown, $\RCA_0 + \ISig_2$]\label[lemma]{cor:brown-lemma}
Let~$P \subseteq A^{<\nat}$ be a piecewise syndetic set, and let~$P = \sqcup_{i < k} C_i$ for some~$k \in \omega$.
Then there is some~$i < k$ such that $C_i$ is piecewise syndetic.
\end{lemma}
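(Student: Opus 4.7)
The approach is induction on the number of colors~$k$. The base case $k=1$ is immediate (take the unique piece) and the case $k=2$ is exactly \cref{lem:brown-lemma}. For the inductive step, given a piecewise syndetic set~$P$ with a $(k+1)$-partition $P = C_0 \sqcup C_1 \sqcup \dots \sqcup C_k$, I would collapse the last $k$ blocks into a single set $C = \bigcup_{1 \leq i \leq k} C_i$ and apply \cref{lem:brown-lemma} to the binary partition $P = C_0 \sqcup C$. Either $C_0$ is already piecewise syndetic and we are done, or $C$ is, in which case the induction hypothesis applied to the $k$-piece partition $C = C_1 \sqcup \dots \sqcup C_k$ produces the desired piecewise syndetic block.

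The role of $\ISig_2$ is purely to justify the induction on~$k$. Unwinding the definitions, the assertion \emph{"$Q$ is piecewise syndetic"} is naturally of the form $\exists \ell\, \forall n\, \exists \sigma\, \varphi(Q, \ell, n, \sigma)$ with $\varphi$ bounded: one asks for an $\ell$-syndetic set~$S$ and a thick set~$T$ with $Q = S \cap T$, which amounts combinatorially to finding, for every~$n$, a word $\sigma$ such that each word in $A^{\leq n}\cdot\sigma$ is within distance~$\ell$ (on the left) of~$Q$. So piecewise syndeticity is $\Sigma^0_2$, and hence the induction statement \emph{"for every $k$-piece partition of $P$, some piece is piecewise syndetic"} is $\Sigma^0_2$ once the finite partition is coded by a single parameter. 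Induction on such a formula is exactly the scheme $\ISig_2$.

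The main bookkeeping obstacle is checking that one genuinely stays within $\Sigma^0_2$: the second-order existential quantifiers over the witnessing thick and syndetic sets must be replaced by their uniform combinatorial characterization, and the reduction from a $(k+1)$-partition to a binary one has to be effective in the coding. Beyond this, the argument is a straightforward external induction using \cref{lem:brown-lemma}. Since Frittaion~\cite{frittaion2017browns} already shows that the analogous Brown's lemma for $(\mathbb{N}, +)$ is equivalent to $\ISig_2$, I expect $\ISig_2$ to be both sufficient and (morally) unavoidable here, and do not anticipate any further axiomatic strength being needed.
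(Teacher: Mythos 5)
Your overall strategy — iterating the two-colour case (\cref{lem:brown-lemma}) by an external induction on the number of colours — is a natural alternative to the paper's proof, which instead fixes a syndetic/thick decomposition $P = S \cap T$ once and for all, forms the finite set
$I = \{ B \subseteq \{0,\dots,k-1\} : (A^{<\omega} \setminus T) \cup \bigcup_{i \in B} C_i \text{ is syndetic}\}$
by bounded $\Sigma^0_2$-comprehension, and extracts a minimal $B \in I$. Both proofs run through essentially the same combinatorics (each is a reorganisation of \cref{lem:brown-lemma}), and either one yields the lemma over $\ACA_0$.

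However, there is a genuine gap in the calibration of induction. Your claim that piecewise syndeticity is $\Sigma^0_2$ does not match the very formula you write: $\exists \ell\, \forall n\, \exists \sigma\, \varphi(Q,\ell,n,\sigma)$ with $\varphi$ bounded has quantifier shape $\exists\forall\exists$, which is $\Sigma^0_3$, not $\Sigma^0_2$. The innermost $\exists \sigma$ is the witness for the thick part, and it cannot be bounded in terms of $n$ and $\ell$ — that is precisely what makes thickness a nontrivial $\Pi^0_2$ condition, and why pasting a leading $\exists \ell$ in front yields $\Sigma^0_3$. Consequently, your inductive hypothesis (which quantifies over piecewise syndeticity at each stage, and whose thick witness changes with each application of \cref{lem:brown-lemma}) is a $\Sigma^0_3$ statement, and the induction you describe is justified by $\ISig_3$ rather than $\ISig_2$. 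This is why the paper does not induct on piecewise syndeticity directly: by freezing the thick set $T$ and tracking only syndeticity of the sets $(A^{<\omega}\setminus T) \cup \bigcup_{i\in B} C_i$ — a genuinely $\Sigma^0_2$ condition ($\exists \ell\, \forall \sigma\, [\text{bounded}]$) — it keeps everything at the $\Sigma^0_2$ level and can then appeal to bounded $\Sigma^0_2$-comprehension, which is what $\ISig_2$ actually buys. Your induction can be repaired to run at $\ISig_2$ if you similarly hold the syndetic part $S$ fixed, define the successive thick sets $T_0 = T$, $T_{j+1} = A^{<\omega}\setminus(C_j \cup (S\setminus T_j))$ by primitive recursion (so that they are $\Delta^0_1$ in the data, uniformly in $j$), and induct on the boolean combination of $\Sigma^0_2$ formulas ``$(\forall i<j)(\tilde S_i \text{ not syndetic}) \Rightarrow T_j \text{ thick}$''; but as written, the complexity bookkeeping fails exactly where you said it was the only thing to check.
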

\begin{proof}
Let~$P = S \cap T$, where~$S$ is syndetic and $T$ is thick. 
By bounded $\Sigma^0_2$ comprehension, the following set exists:
$$
I = \{ B \subseteq \{0, \dots, k-1\} : (A^{<\omega} \setminus T) \cup \bigcup_{i \in B} C_i \mbox{ is syndetic } \}
$$
Note that~$P \cup (A^{<\omega} \setminus T)$ is syndetic, hence $\{0, \dots, k-1\} \in I$.
Let~$B \in I$ be minimal for the inclusion. Note that~$B \neq \emptyset$. Fix any $i \in B$.
Since~$(A^{<\omega} \setminus T) \cup \bigcup_{j \in B} C_j \}$ is syndetic, then either
$(A^{<\omega} \setminus T) \cup \bigcup_{j \in B \setminus \{i\}} C_j$ is syndetic, or $C_i$ is piecewise syndetic. The former case would contradict minimality of~$B$, so the latter case holds.
\end{proof}

Before finishing this section, we prove a small technical lemma which will be useful in the proof of the Ordered Variable Word theorem.

%
%

\begin{lemma}[$\RCA_0$]\label[lemma]{lem:thick-subset-thick}
Suppose $I \subseteq A^{<\nat}$ is a thick set and $\ell \in \nat$. 
Then the set $J = \{ \sigma \in I : A^{\leq \ell} \cdot \sigma \subseteq I \}$ is thick.
\end{lemma}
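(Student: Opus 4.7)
The plan is simply to unpack what it means for $\sigma$ to belong to $J$ and then reduce the thickness requirement for $J$ to a thickness requirement for $I$ at a larger level. Fix an arbitrary $m \in \omega$; to witness thickness of $J$ I must produce $\sigma \in A^{<\omega}$ with $A^{\leq m} \cdot \sigma \subseteq J$, i.e.\ such that for every $\tau \in A^{\leq m}$ the word $\tau\sigma$ lies in $I$ and additionally satisfies $A^{\leq \ell}\cdot(\tau\sigma) \subseteq I$.

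The key observation is that these two conditions collapse into a single one: they are equivalent to demanding $\rho\tau\sigma \in I$ for all $\rho \in A^{\leq \ell}$ and $\tau \in A^{\leq m}$ (the case $\rho = \varepsilon$ absorbs the first condition). Now $\{\rho\tau : \rho \in A^{\leq \ell},\ \tau \in A^{\leq m}\} = A^{\leq \ell + m}$, since every word of length at most $\ell + m$ can be cut after its first $\min(|w|,\ell)$ letters into a prefix of length $\leq \ell$ and a suffix of length $\leq m$. So the requirement on $\sigma$ reduces to $A^{\leq \ell + m} \cdot \sigma \subseteq I$.

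By thickness of $I$ applied at the parameter $\ell + m$, such a $\sigma$ exists. Hence $A^{\leq m} \cdot \sigma \subseteq J$, and since $m$ was arbitrary, $J$ is thick. The whole argument is bounded and existential, so there is no real obstacle; the only point to double-check is the decomposition $A^{\leq \ell+m} = A^{\leq \ell}\cdot A^{\leq m}$, which is elementary and formalises without issue in $\RCA_0$.
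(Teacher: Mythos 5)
Your proof is correct and takes essentially the same approach as the paper's: both reduce the thickness requirement for $J$ at level $m$ to thickness of $I$ at level $\ell + m$, using the decomposition of a word of length $\leq \ell + m$ into a prefix of length $\leq \ell$ and a suffix of length $\leq m$. The only cosmetic difference is that you make the identity $A^{\leq \ell+m} = A^{\leq \ell}\cdot A^{\leq m}$ explicit, whereas the paper applies the same observation inline.
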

\begin{proof}
Fix some~$m \in \nat$. Since $I$ is thick, there is some~$\tau \in A^{<\nat}$ such that
$A^{\leq m+\ell} \cdot \tau \subseteq I$. Let us show that $A^{\leq m} \cdot \tau \subseteq J$, in other words, for every~$\sigma \in A^{\leq m}$, $A^{\leq \ell} \cdot \sigma\tau \in I$.
Fix any~$\sigma \in A^{\leq m}$ and $\rho \in A^{\leq \ell}$. Then since $A^{\leq m+\ell} \cdot \tau \subseteq I$, $\rho\sigma\tau \in I$.
\end{proof}

\section{A proof of the Ordered Variable Word theorem in $\aca_0$}\label[section]{sect_ovw-comb-aca}

The purpose of this section is to prove the following piecewise syndetic version of the Ordered Variable Word over~$\ACA_0$.

\begin{theorem}[Piecewise Syndetic Ordered Variable Word Theorem, $\ACA_0$]\label[theorem]{thm:piecewise-syndetic-ovw}
Let~$P \subseteq A^{<\nat}$ be a piecewise syndetic set.
Then there exists an OVW-tree $T \subseteq P$ over~$A$ of dimension~$\omega$.
\end{theorem}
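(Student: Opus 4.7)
The plan is to build the ordered $\omega$-variable word $W = \sigma \cdot v_0 \cdot v_1 \cdots$ by recursion on $n$, maintaining the following invariant at stage $n$: I have constructed an $n$-variable word $W_n = \sigma \cdot v_0 \cdots v_{n-1}$ together with a piecewise syndetic set $Q_n \subseteq A^{<\omega}$ with $\epsilon \in Q_n$, such that $W_n[u] \cdot z \in P$ for every $u \in A^n$ and every $z \in Q_n$. Specialising to $z = \epsilon$ forces the level-$n$ leaves $\{W_n[u] : u \in A^n\}$ into $P$, while the internal nodes at depths strictly less than $n$ are already in $P$ from the invariant at earlier stages. For the base case I would set $\sigma = \epsilon$ and $Q_0 = P$; if $\epsilon \notin P$, I would instead use \cref{lem:thick-subset-thick} applied to the thick part of $P$, combined with the syndeticity of the $S$-part, to select an initial $\sigma \in P$ whose tail set $\{z : \sigma z \in P\}$ is piecewise syndetic.

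The crux of the proof is an extension lemma: given a piecewise syndetic $Q_n = S_n \cap T_n$, produce a left $1$-variable word $v_n$ in the fresh variable $x_n$ and a piecewise syndetic set $Q_{n+1}$ with $\epsilon \in Q_{n+1}$ satisfying $v_n[a] \cdot Q_{n+1} \subseteq Q_n$ for every $a \in A$. The plan is to exploit the thickness of $T_n$, strengthened through \cref{lem:thick-subset-thick}, to isolate an arbitrarily long rectangular block $A^{\leq m} \cdot \tau \subseteq T_n$ with good internal thickness, and then to colour each $\rho \in A^m$ by two pieces of data: first, the shortest prefix witnessing $\rho \tau \in S_n$, of which there are only finitely many options by the $\ell$-syndeticity of $S_n$; and second, the class of the continuation $\{z : \rho \tau z \in Q_n\}$ under a fixed Brown's-lemma partition. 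Applying the Hales-Jewett theorem to this finite colouring of $A^m$ (for $m$ large) yields a monochromatic combinatorial line $\{v'[a] : a \in A\}$, and the common syndetic prefix together with $v'$ and $\tau$ assembles the desired left $1$-variable word $v_n$, while the common Brown-class provides $Q_{n+1}$, piecewise syndetic by \cref{cor:brown-lemma}.

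The main obstacle will be this extension lemma, specifically the need to produce the combinatorial line and the piecewise syndetic continuation \emph{simultaneously}: \cref{cor:brown-lemma} alone yields a piecewise syndetic class but no variable-word structure, while Hales-Jewett alone yields a line with no control over its continuation. The coupling of the two via a single Hales-Jewett colouring encoding both the syndetic prefix and the Brown-class is where the bulk of the combinatorial work lies. Once the extension lemma is proved, the infinite construction goes through in $\aca_0$: arithmetic comprehension is used both to handle the $\bst$ reasoning inside \cref{cor:brown-lemma} (which underlies each extension step) and to collect the sequence $(v_n)_{n \in \omega}$, and the resulting $W = \sigma \cdot v_0 \cdot v_1 \cdots$ witnesses an OVW-tree $T \subseteq P$ of dimension~$\omega$.
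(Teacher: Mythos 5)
Your overall skeleton — iterate an "extension step" that produces the next variable word together with a new piecewise syndetic set, and glue the steps into an $\omega$-dimensional OVW-tree — matches the structure of the paper's proof, which iterates \cref{lem:coloring-thick-left-ps}. The gap is in the extension lemma itself, and it is precisely where you yourself flag the "bulk of the combinatorial work." The colouring you propose is not well-defined: you want to colour each $\rho \in A^m$ by "the class of the continuation $\{z : \rho\tau z \in Q_n\}$ under a fixed Brown's-lemma partition," but those continuations are arbitrary subsets of $A^{<\omega}$, and there is no finite classification of subsets of $A^{<\omega}$ such that landing in a distinguished class would guarantee that the common continuation is piecewise syndetic. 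Brown's lemma (\cref{cor:brown-lemma}) tells you that one piece of a finite partition of a \emph{given} piecewise syndetic set is itself piecewise syndetic; it does not let you colour \emph{words $\rho$} by a finite "class" of the continuation in a way that survives Hales-Jewett. Plain Hales-Jewett gives you a monochromatic combinatorial line $\{v'[a]:a\in A\}$, hence membership of the leaves $v'[a]\tau$ in $Q_n$ (with a common syndetic prefix), but nothing whatsoever about the \emph{simultaneous} tail sets $\bigcap_a\{z : v'[a]\tau z\in Q_n\}$ being piecewise syndetic.

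The paper's proof of the extension step (\cref{lem:coloring-thick-left}, then \cref{lem:coloring-thick-left-ps}) is a proof by contradiction, and uses a stronger ingredient than Hales-Jewett. At each stage $n$, it considers the set $K$ of $\tau$'s for which there already exists a bounded-size OVW-line $S$ with $S(0)$ and $S(1)\cdot\tau$ $X$-homogeneous for the same colour. If $K$ is piecewise syndetic, Brown's lemma is applied to the \emph{finite partition of $K$ indexed by the pair (OVW-line, colour)}, and the resulting piecewise syndetic cell is exactly the continuation set $P$ you are after — note this applies Brown to a partition of words, which is legitimate, not to a "partition of subsets." If $K$ is not piecewise syndetic its complement is thick, one picks $\sigma_n$ deep inside it, and the recursion continues. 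Should the recursion never terminate, the $\sigma_n$'s yield an embedding $h$ of $A^{<\omega}$, and the \emph{finitary} Ordered Variable Word theorem of Dodos–Kanellopoulos–Tyros (\cref{cor:finite-ovw-letter}) — whose crucial feature is that the single short word $S(0)$ and the longer shifted words $S(1)\cdot a$ get the \emph{same} colour — produces the contradiction. This cross-level colour control has no analogue in Hales-Jewett, which colours words of a single fixed length. So the missing idea is twofold: (i) replace your "Brown class" colouring by a Brown application to the set $K$ of good $\tau$'s, partitioned by the finitely many (line, colour) pairs; and (ii) handle the case where $K$ fails to be piecewise syndetic by the recursion-plus-contradiction mechanism powered by \cref{cor:finite-ovw-letter}, which is genuinely stronger than Hales-Jewett.
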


Since piecewise syndeticity is partition regular, \Cref{thm:piecewise-syndetic-ovw} implies in particular the Ordered Variable Word theorem.

\begin{corollary}[$\ACA_0$]
For every finite alphabet~$A$ and every finite partition~$C_0 \sqcup \dots \sqcup C_{\ell-1} = A^{<\nat}$,
there is some color~$i < \ell$ and an OVW-tree $T \subseteq C_i$ of dimension~$\omega$.
\end{corollary}
\begin{proof}
By Brown's lemma (see \Cref{cor:brown-lemma}), there is some~$i < \ell$ such that $C_i$ is piecewise syndetic. By \Cref{thm:piecewise-syndetic-ovw}, there exists an OVW-tree $T \subseteq C_i$ over~$A$ of dimension~$\omega$.
\end{proof}



We are going to use the following finitary version of the Ordered Variable Word. 

\begin{theorem}[$\RCA_0$]\label[theorem]{cor:finite-ovw-letter}
Fix a finite alphabet~$A$, a finite set of colors~$C$.
For every coloring $f : A^{<\omega} \to C$, there is a monochromatic OVW-line~$S \subseteq A^{<\omega}$ and a letter~$a \in A$ such that $S(0)$ and $S(1)\cdot a$ are both $f$-homogeneous for the same color.
\end{theorem}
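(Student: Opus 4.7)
The plan is to apply a finitary Ramsey-type argument to an enriched coloring that captures both $f(v)$ and the colors of its single-letter extensions. First, I will define the auxiliary coloring $\tilde f : A^{<\omega} \to C \times C^A$ by $\tilde f(v) = (f(v), (f(v \cdot a))_{a \in A})$. Since $A$ and $C$ are finite, this coloring takes only finitely many values.

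Next, I will apply the finitary version of the Carlson--Simpson lemma for OVW-lines (provable in $\RCA_0$ via iterated finitary Hales--Jewett) to $\tilde f$. This produces an OVW-line $S = \{c\} \cup \{c \cdot a' \cdot w[a'] : a' \in A\}$ on which $\tilde f$ is constant, yielding a color $c_1 \in C$ and a function $c_2 : A \to C$ such that $f(v) = c_1$ for every $v \in S$ and $f(v \cdot a) = c_2(a)$ for every $v \in S$ and $a \in A$.

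If $c_2(a_0) = c_1$ for some $a_0 \in A$, then the conclusion follows immediately: the OVW-line $S$ has $f$-color $c_1$ at each level, and $S(1) \cdot a_0 = \{c \cdot a' \cdot w[a'] \cdot a_0 : a' \in A\}$ is $f$-homogeneous of color $c_2(a_0) = c_1$, matching the color of $S(0) = \{c\}$.

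The main obstacle is the complementary case $c_1 \notin c_2(A)$, in which no single-letter extension of any element of $S$ recovers the base color. To address this, I would iterate the construction using a richer enriched coloring $\tilde f_k(v) = (f(v \cdot u))_{u \in A^{\leq k}}$ that tracks extensions by words of length up to $k$. The key observation is that every level-$1$ element $c \cdot a' \cdot w[a']$ of $S$ is itself an extension of $c$ of color $c_1$ and of length $1 + |w[a']|$; thus once $k$ exceeds this length, some $u$ with $1 \leq |u| \leq k$ satisfies $f(c \cdot u) = c_1$. Writing $u = \alpha \cdot a_0$ with $a_0 \in A$, the modified OVW-line $\{c\} \cup \{c \cdot a' \cdot w[a'] \cdot \alpha : a' \in A\}$ together with the letter $a_0$ will then satisfy both the homogeneity of level~$1$ and the matching condition between $S(0)$ and $S(1) \cdot a_0$. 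Managing the interdependence between the parameter $k$ and the length of $w$ so that the argument carries through in $\RCA_0$ is the most delicate step.
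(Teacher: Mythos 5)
Your first step is sound and essentially optimal for the case $c_1 \in c_2(A)$: the enriched coloring $\tilde f(v) = (f(v),(f(v\cdot a))_{a\in A})$ plus the finitary one-dimensional Carlson--Simpson lemma does produce a monochromatic OVW-line together with a consistent family of single-letter extension colors, and if one of them matches $c_1$ you are done. But the complementary case is where all the content is (consider $f$ = parity of length: no single-letter extension ever matches), and here your step 4 has a genuine circularity that you acknowledge but do not resolve. Your key observation is that, for a level-1 element $c\cdot a'\cdot w[a']$ of the enriched-monochromatic line $S$, the word $u = a' w[a']$ of length $1+|w|$ is an extension of $c$ of color $c_1$; to exploit this via $\tilde f_k$-monochromaticity you need $1+|w|\leq k$, so that $d(u) = d(\epsilon)$ and then $\{c\}\cup\{c\cdot a'\cdot w[a']\cdot\alpha : a'\in A\}$ (with $u=\alpha a_0$) works. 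However, $|w|$ is only bounded by the finitary CSL bound for a coloring with $|C|^{|A^{\leq k}|}$ colors, which increases with $k$; there is no value of $k$ for which you can guarantee $1+|w|\leq k$, so the parameter and the bound chase each other.

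The paper sidesteps this by working one dimension higher rather than enriching the color space. From a single application of the finitary dimension-$2$ CSL (the statement of Dodos--Kanellopoulos--Tyros, Section 4), one obtains a monochromatic OVW-tree $T$ of dimension $2$, whose level $T(2)$ already packages the long extensions that your enriched coloring was trying to track, with the crucial feature that the length of the second variable block is controlled by that same single application rather than chosen in advance. One then takes any nonempty $\sigma$ with $T(1)\cdot\sigma\subseteq T(2)$ (any instantiation of the second variable block), writes $\sigma=\sigma^* a$, and checks that $S = T(0)\cup(T(1)\cdot\sigma^*)$ is an OVW-line with $S(0)=T(0)$ and $S(1)\cdot a = T(1)\cdot\sigma\subseteq T(2)$, both $f$-homogeneous for the same color. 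If you want to salvage your own route, you essentially need to reconstruct the dimension-$2$ finitary statement (so that the extension length is bounded by the same quantity that bounds $|w|$), at which point you are reproducing the paper's argument; the enriched-coloring idea by itself does not break the circularity.
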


It follows by compactness from the Ordered Variable Word theorem, but Dodos, Kanellopoulos and Tyros~\cite[Section 4]{Dodos2014DensityCS} gave an elementary proof which can be formalized in~$\RCA_0$. 

\begin{remark}
The statement of \Cref{cor:finite-ovw-letter} is slightly different from the one of~\cite[Section 4]{Dodos2014DensityCS}, but can be recovered by taking a monochromatic  OVW-tree $T \subseteq A^{<\omega}$ of dimension~2, then picking any non-empty  $\sigma \in A^{<\omega}$ such that $T(1)\cdot \sigma \subseteq T(2)$. Let~$\sigma^{*}$ be the word~$\sigma$ truncated from its last letter~$a$,
and let~$S = T(0) \cup   (T(1)\cdot\sigma^{*}) $. Then $S$ is an OVW-line such that $S(0)$ and $S(1)\cdot a$ are both $f$-homogeneous for the same color.
\end{remark}

We are now ready to prove the main combinatorial lemma. The piecewise syndetic version of the Ordered Variable Word follows by iterating the following lemma.

\begin{lemma}[$\RCA_0 + \ISig_2$]\label[lemma]{lem:coloring-thick-left}
Let~$I \subseteq A^{<\omega}$ be a thick set and $X : I \to C$ be a coloring.
There is an OVW-line $S \subseteq A^{<\omega}$ and a piecewise syndetic $P \subseteq I$ such that 
 $S(0)$ and $S(1) \cdot P$ are $X$-homogeneous for the same color.
\end{lemma}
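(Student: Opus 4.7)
The plan is to combine the iterated Brown's lemma (\Cref{cor:brown-lemma}) with the finitary Ordered Variable Word theorem (\Cref{cor:finite-ovw-letter}): Brown will extract a piecewise syndetic $P$ on which a common coloring ``pattern'' is constant, and the finitary OVW theorem will then supply the OVW-line structure on that common pattern.

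First, fix $\ell$ sufficiently large to allow \Cref{cor:finite-ovw-letter} to produce an OVW-line inside $A^{\leq \ell}$. By \Cref{lem:thick-subset-thick}, the set $T_\ell := \{p \in A^{<\omega} : A^{\leq \ell} \cdot p \subseteq I\}$ is thick, and for each $p \in T_\ell$ the pattern $f_p : A^{\leq \ell} \to C$ given by $f_p(\tau) = X(\tau \cdot p)$ is well-defined. The map $p \mapsto f_p$ is a coloring of $T_\ell$ with at most $|C|^{|A^{\leq \ell}|}$ colors; since $T_\ell$ is thick (hence piecewise syndetic), the iterated Brown's lemma yields a piecewise syndetic $P \subseteq T_\ell$ on which $f_p \equiv f^*$ for some fixed $f^* : A^{\leq \ell} \to C$.

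Second, apply the finitary OVW theorem to $f^*$, producing an OVW-line $S_0 \subseteq A^{\leq \ell}$ with root $c$, variable word $w$, a letter $a^* \in A$, and a color $i \in C$ satisfying $f^*(c) = f^*(c \cdot a \cdot w[a] \cdot a^*) = i$ for every $a \in A$. Translating via the definition of $f_p$, we obtain $X(c \cdot p) = X(c \cdot a \cdot w[a] \cdot a^* \cdot p) = i$ for every $a \in A$ and every $p \in P$. The natural candidate OVW-line $S$ then has root $c$ and variable word $w \cdot a^*$, so that $S(0) = \{c\}$, $S(1) = \{c \cdot a \cdot w[a] \cdot a^* : a \in A\}$, and $S(1) \cdot P \subseteq X^{-1}(i) \cap I$ is $X$-homogeneous for the color $i$.

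The main obstacle will be ensuring that $S(0)$ is itself $X$-homogeneous for the same color $i$, that is, $c \in I$ with $X(c) = i$: the argument above only provides $X(c \cdot p) = i$ for $p \in P$, not $X(c)$ directly. To close this gap, I would refine the first step by enriching the Brown coloring with an additional component tracking the direct $X$-value on candidate roots in $A^{\leq \ell} \cap I$, and apply the finitary OVW theorem to the resulting product coloring. With a careful choice of $\ell$, the monochromatic OVW-line produced will have a root $c$ that both matches the pattern and lies in $J := X^{-1}(i) \cap I$, yielding the lemma.
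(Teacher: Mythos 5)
Your diagnosis of the gap is correct, and it is a genuine obstruction rather than a loose end: the quantities $X(c)$ and $f^*(c)=X(c\cdot p)$ (for $p\in P$) are controlled by independent mechanisms, and nothing in your setup links them. Enriching the finitary coloring to $\tau\mapsto (X(\tau),f^*(\tau))$ and applying \Cref{cor:finite-ovw-letter} yields a constant pair $(j,i)$ with $X(c)=j$ and $f^*(c)=i$, but with no relation between $j$ and $i$. Enriching the \emph{Brown} coloring $p\mapsto f_p$ does not help either: the restriction of $X$ to $A^{\leq\ell}\cap I$ is a fixed object independent of $p$, so it contributes a constant component that Brown's lemma cannot exploit. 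The structural source of the problem is that $S(0)=\{c\}$ is a single short word carrying no suffix from $P$, whereas every element of $S(1)\cdot P$ ends with an element of $P$; your pattern $f^*$ only ever observes words of the latter kind.

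The paper handles this asymmetry by arguing by contradiction rather than directly. Assuming the conclusion fails, one builds an infinite sequence of left-variable words $w_0,w_1,\dots$ so that at each stage either the set of $\tau$'s witnessing the conclusion for some OVW-line of the current size is already piecewise syndetic (in which case \Cref{cor:brown-lemma} finishes immediately), or its complement in $I$ is thick, permitting the next $\sigma_n$ to be chosen with $A^{\leq N_{n-1}+1}\cdot\sigma_n$ inside that thick complement. The resulting embedding $h(a_0\cdots a_n)=w_0[a_0]\cdots w_n[a_n]$ carries $A^{<\omega}$ into $I$. Applying \Cref{cor:finite-ovw-letter} to the pulled-back coloring $Y=X\circ h$ gives a finite OVW-line $T$ and a letter $b$ with $T(0)$ and $T(1)\cdot b$ both $Y$-homogeneous; pushing forward along $h$, the OVW-line $S=h(T)$ lies inside $I$, with $S(0)=h(T(0))$ a single word of $I$ and $S(1)\cdot w_M[b]=h(T(1)\cdot b)$, both $X$-homogeneous of the same color, contradicting the construction. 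Crucially, the homogeneity of $S(0)$ comes for free because $h$ places it inside $I$, and the asymmetry built into \Cref{cor:finite-ovw-letter} (a root $T(0)$ versus $T(1)\cdot b$) exactly matches the asymmetry of the target statement, which your pattern-then-line decomposition does not.

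A secondary issue: your argument also needs a uniform bound $\ell=\ell(|A|,|C|)$ so that \Cref{cor:finite-ovw-letter} produces a line inside $A^{\leq\ell}$, but the theorem as stated supplies no such bound, and extracting one by compactness would invoke bounded K\"onig's lemma. The cited finitary proof of Dodos, Kanellopoulos, and Tyros does give explicit bounds, so this part is repairable; but the paper's proof sidesteps the need entirely by applying \Cref{cor:finite-ovw-letter} to the infinite coloring $Y$ and only afterwards using the (a posteriori finite) length $|T|$ to locate the contradicted stage.
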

\begin{proof}
Fix~$I$ and $C$. Suppose the lemma does not hold. We will build an infinite sequence of words~$\sigma_0, \sigma_1, \dots$ such that, letting $w_n$ be the left variable word~$x_0 \sigma_n$ and $N_n = \sum_{m \leq n} |w_n|$, the following property holds (for convenience, let~$N_{-1} = 0$):
\begin{equation}\label{eq-thick-neg2}
	\text{\parbox{.81\textwidth}{For every~$n \in \omega$, every~$b \in A$, every OVW-line $S \subseteq A^{<\omega}$ such that $|S| = N_{n-1}$, with $S(0) \subseteq I$, we have $A^{N_{n-1}+1} \cdot \sigma_n \subseteq I$ and $S(0)$ and $S(1)\cdot w_n[b]$ are not $X$-homogeneous for the same color.}}
\end{equation}
Let~$\sigma_0$ be such that $A \cdot \sigma_0 \subseteq I$. Assume $\sigma_0, \dots, \sigma_{n-1}$ have been defined.

Let~$K$ be the set of all~$\tau \in I$ such that there is an OVW-line $S \subseteq A^{<\omega}$ with $|S| = N_{n-1}$ and $S(0) \subseteq I$ such that $S(0)$ and $S(1)\cdot \tau$ are $X$-homogeneous for the same color. If~$K$ is piecewise syndetic, then by Brown's lemma (\Cref{cor:brown-lemma}), there is an OVW-line $S \subseteq A^{<\omega}$ such that $|S| = N_{n-1}$, with $S(0) \subseteq I$, and a piecewise syndetic set~$P \subseteq K$ such that
$S(0)$ and $S(1) \cdot P$ are $X$-homogeneous for the same color. Then the lemma is satisfied and we are done.

Otherwise, the set $J = I \setminus K$ is thick. Then for every~
OVW-line $S \subseteq A^{<\omega}$ with $|S| = N_{n-1}$ and $S(0) \subseteq I$,   every~$\tau \in J$ with
 $S(1)\cdot \tau \subseteq I$,
 we have $S(0)$ and $S(1)\cdot \tau$ are not $X$-homogeneous for the same color.
Let~$\sigma_n$ be such that $A^{\leq N_{n-1}+1} \cdot \sigma_n \subseteq J$, and let~$w_n = x_0 \sigma_n$.
We claim that~$\sigma_n$ satisfies (\ref{eq-thick-neg2}). Fix any~$b \in A$, any OVW-line $S \subseteq A^{<\omega}$ with $|S| = N_{n-1}$ and $S(0) \subseteq I$. Clearly, $w_n[b] = b \sigma_n \in J$
and $S(1)\cdot w_n[b]  \subseteq I$ (since $A^{\leq N_{n-1}+1} \cdot \sigma_n \subseteq J$).
Therefore, $S(0)$ and $S(1)\cdot w_n[b]$ are not $X$-homogeneous for the same color.
 Thus~$\sigma_n$ satisfies~(\ref{eq-thick-neg2}).

Consider the embedding $h : A^{<\omega} \to C$ defined by $h(a_0\cdots a_n) = w_0[a_0] \cdots w_n[a_n]$,
and let~$Y = X \circ h$. Note that by choice of~$(\sigma_n)_{n \in \omega}$, $\dom h = A^{<\omega}$.
By \Cref{cor:finite-ovw-letter}, there is an OVW-line $T \subseteq A^{<\omega}$, a letter $b \in A$, and a color~$i \in C$ such that $T(0)$ and $T(1)\cdot b$ are both $Y$-homogeneous for color~$i$.
Let~$S \subseteq J$ be the OVW-line obtained by taking the image of~$T$ by $h$.
In particular, $S(0) = h(T(0))$ and $S(1)\cdot w_M[b] = h(T(1)\cdot b)$ and $|S| = N_{M-1}$, where $M = |T|+1$.
By definition of~$h$ and $Y$, $S(0)$ and $S(1)\cdot w_M[b]$ are both~$X$-homogeneous for color~$i$. This contradicts (\ref{eq-thick-neg2}).
\end{proof}

It will be convenient to reformulate the previous lemma into the following equivalent lemma, which is in terms of piecewise syndetic sets instead of finite colorings of thick sets. One can indeed see a piecewise syndetic set as a particular coloring of a thick set, where the color is the witness of syndeticity.

\begin{lemma}[$\RCA_0 + \ISig_2$]\label[lemma]{lem:coloring-thick-left-ps}
Let~$P \subseteq A^{<\nat}$ be a piecewise syndetic set.
Then there is an OVW-line $S \subseteq A^{<\nat}$ and a piecewise syndetic subset~$Q $ such that $S(0) \subseteq P$ and  $S(1) \cdot Q \subseteq P$.
\end{lemma}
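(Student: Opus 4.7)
The plan is to reduce this statement to Lemma~\ref{lem:coloring-thick-left} by viewing a piecewise syndetic set as a thick set equipped with a finite coloring, where the color at each $\sigma$ records a syndeticity witness for $\sigma$.

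First, I would write $P = U \cap T$ with $U$ being $\ell$-syndetic and $T$ thick. To make syndeticity witnesses compatible with $T$, I pass to the subset $T^{*} = \{\sigma \in T : A^{\leq \ell} \cdot \sigma \subseteq T\}$, which is itself thick by Lemma~\ref{lem:thick-subset-thick}. I then define a coloring $X : T^{*} \to A^{\leq \ell}$ by setting $X(\sigma)$ to be the lexicographically least $\tau \in A^{\leq \ell}$ with $\tau\sigma \in U$; such a $\tau$ exists by $\ell$-syndeticity of $U$. The key invariant is that whenever $X(\sigma) = \tau$, one has $\tau\sigma \in U$ by definition and $\tau\sigma \in T$ because $\sigma \in T^{*}$, so $\tau\sigma \in P$.

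Next, I apply Lemma~\ref{lem:coloring-thick-left} to the pair $(T^{*}, X)$, which yields an OVW-line $S \subseteq A^{<\omega}$, a piecewise syndetic set $Q \subseteq T^{*}$, and a color $\tau^{*} \in A^{\leq \ell}$ such that both $S(0)$ and $S(1) \cdot Q$ are $X$-homogeneous for $\tau^{*}$. To finish, I prepend $\tau^{*}$ to the generating variable word of $S$: writing $S(0) = \{c\}$ and letting $w_0$ be the left 1-variable word of $S$, I let $S'$ be the OVW-line generated by $(\tau^{*} c)^\frown w_0$, so that $S'(0) = \{\tau^{*} c\}$ and $S'(1) = \{\tau^{*} \rho : \rho \in S(1)\}$. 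The key invariant above gives $S'(0) \subseteq P$ and $S'(1) \cdot Q \subseteq P$, so the pair $(S', Q)$ witnesses the lemma.

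The main subtlety I anticipate is ensuring that $\tau^{*} \rho$ lands in $U$ \emph{and} $T$ simultaneously; the thickening step $T \rightsquigarrow T^{*}$ secures membership in $T$, while the definition of $X$ secures membership in $U$. Since both constraints are calibrated with the same parameter $\ell$, they combine without additional effort.
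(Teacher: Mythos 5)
Your proof is correct and follows essentially the same route as the paper: decompose $P$ as (syndetic) $\cap$ (thick), thicken via Lemma~\ref{lem:thick-subset-thick} so that syndeticity witnesses stay inside the thick part, use those witnesses as a finite coloring, apply Lemma~\ref{lem:coloring-thick-left}, and prepend the resulting constant color to the OVW-line. The only cosmetic differences are that you fix the lexicographically least witness (the paper leaves the choice of $\rho$ implicit) and spell out the prepending step in terms of the generating variable word rather than the shorthand $S = \rho \cdot T$.
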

\begin{proof}
Say~$P = \h P \cap I$, where $\h P$ is $m$-syndetic and $I$ is thick.
By \Cref{lem:thick-subset-thick}, the set $J = \{ \sigma \in I : A^{\leq m} \cdot \sigma \subseteq I \}$ is thick.
Let~$X : J \to A^{\leq m}$ be defined by $X(\sigma) = \rho$ such that $\rho\sigma \in \h P$. Note that
\begin{align}\label{cseq1}
\text{ $X(\sigma)\cdot\sigma \in P$ for all $\sigma\in J$.}
\end{align}
By \Cref{lem:coloring-thick-left}, there is an OVW-line $T \subseteq A^{<\nat}$, a piecewise syndetic subset~$Q \subseteq J$ and a color~$\rho$ such that
$T(0)$ and $T(1)\cdot Q$ are in color $\rho $ of $X$.
By (\ref{cseq1}), $\rho\cdot T(0), \rho\cdot T(1)\cdot Q\subseteq  P$.
Thus the OVW-line $S = \rho\cdot T$ is as desired.
\end{proof}

We are now ready to prove \Cref{thm:piecewise-syndetic-ovw}.

\begin{proof}[Proof of \Cref{thm:piecewise-syndetic-ovw}]
Let $P \subseteq A^{<\nat}$ be a piecewise syndetic set.
By \Cref{lem:coloring-thick-left-ps}, there an OVW-line~$S$ and a piecewise syndetic set~$P_0 \subseteq A^{<\nat}$ such that $S(0) \subseteq P$ and   $S(1) \cdot  P_0 \subseteq P$. Set~$T_0 = S(0)$ and let $w_0$ be the left variable word such that $S(1) = S(0) \cdot   w_0[A] $. Note that $T_0$ is an OVW-tree of dimension~0.

Assume  by induction $T_s$ is an OVW-tree over~$A$ of dimension~$s$, $w_s$ is a left variable word and $P_s \subseteq A^{<\nat}$ is a piecewise syndetic set such that
\begin{enumerate}[(a)]
\item  $T_s \subseteq P$ and
\item   $T_s(s) \cdot w_s[A] \cdot P_s \subseteq P.$
\end{enumerate}
By \Cref{lem:coloring-thick-left-ps}, there is an OVW-line~$S$ and a piecewise syndetic set~$P_{s+1} \subseteq A^{<\nat}$ such that  $S(0) \subseteq P_s$ and
$S(1) \cdot  P_{s+1} \subseteq P_s$.
Let~$$T_{s+1} = T_s \cup
(T_s(s)\cdot w_s[A]\cdot S(0)) $$ and let $w_{s+1}$ be the left variable word such that $S(1) = S(0) \cdot   w_{s+1}[A]$.
Note that $T_{s+1}$ is an OVW-tree over~$A$ of dimension~$s+1$. It suffices to verify the induction assumption.

\emph{Claim 1:} $T_{s+1} \subseteq P$. Indeed, by (a) $T_s \subseteq P$ and by (b) and   $S(0) \subseteq P_s$, $$T_{s+1}(s+1) = T_s(s)\cdot w_s[A]\cdot S(0) \subseteq
T_s(s)\cdot w_s[A]\cdot P_s\subseteq P.$$

\emph{Claim 2:}   $T_{s+1}(s+1) \cdot w_{s+1}[A] \cdot P_{s+1} \subseteq P$.
\begin{align}\nonumber
T_{s+1}(s+1) \cdot w_{s+1}[A] \cdot P_{s+1}
 &= T_s(s)\cdot w_s[A]\cdot S(0)\cdot w_{s+1}[A]\cdot P_{s+1}
\\ \nonumber
&= T_s(s)\cdot w_s[A]\cdot S(1)\cdot P_{s+1}
\\ \nonumber
&\subseteq T_s(s)\cdot w_s[A]\cdot P_s
\subseteq P.
\end{align}
The four equalities are due to: definition of $T_{s+1}$,
definition of $w_{s+1}$, $S(1)\cdot P_{s+1}\subseteq P_s$
and (b).
\end{proof}

\begin{remark}\label[remark]{rem:full-induction-aca}
Although the proofs of \Cref{lem:coloring-thick-left-ps} and \Cref{lem:coloring-thick-left} are over~$\RCA$, that is, $\RCA_0$ with more induction, the proof of \Cref{thm:piecewise-syndetic-ovw} holds in $\ACA_0$ with restricted induction. Indeed, $\ACA_0$ proves the existence of a countable coded $\omega$-model of~$\RCA_0$ (see Theorem VIII.2.11 of Simpson~\cite{Simpson2009Subsystems}). Since every countable coded  $\omega$-model satisfies full induction, then every $\Pi^1_2$ consequence of $\RCA$ is provable in~$\ACA_0$. Simply note that \Cref{lem:coloring-thick-left-ps} and \Cref{lem:coloring-thick-left} are $\Pi^1_2$ statements.
\end{remark}

Note that if the coloring is computable, then the sequences built in the proof of \Cref{thm:piecewise-syndetic-ovw} are computable in any PA over $\emptyset'$. Therefore, for any set $P$ of PA degree over $\emptyset'$, any computable instance of the Ordered Variable Word theorem admits a $P$-computable solution.
We say that a problem $\Psf$ admits \emph{cone avoidance} if for every pair of sets $Z, C$ such that $C \not \leq_C Z$, every $Z$-computable instance of $\Psf$ admits a solution $Y$ such that $C \not \leq_T Z \oplus Y$. It admits \emph{strong cone avoidance} if we relax the requirement on $Z$ to be computable in the definition of cone avoidance.
The following question remains open:

\begin{question}
Does the Ordered Variable Word theorem admit cone avoidance
or even strong cone avoidance?
\end{question}

\section{A proof of the Density Ordered Variable Word theorem in $\aca_0$}\label[section]{sect_ovw-aca}

Dodos, Kanellopoulos and Tyros proved in \cite{Dodos2014DensityCS} a density version of the Ordered Variable Word theorem, a stronger result where the color is fixed and of positive $\limsup$. The goal of this section is to show that their result holds in $\ACA_0$:

\begin{theorem}[Density Ordered Variable Word Theorem, \cite{Dodos2014DensityCS}, $\ACA_0$] \label[theorem]{th:density-OVW}
Fix a finite alphabet $A$, and let $D \subseteq A^{<\omega}$ be of positive upper density. Then there exists an OVW-tree $T \subseteq D$ over $A$ of dimension~$\omega$.
  \end{theorem}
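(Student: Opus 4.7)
The plan is to mimic the architecture of the piecewise-syndetic proof from \Cref{sect_ovw-comb-aca}, replacing the partition-regular notion of piecewise syndeticity throughout by positive upper density. The two key observations are that positive upper density is already shown to be partition regular in \Cref{sect_notation}, and that Dodos--Kanellopoulos--Tyros \cite{Dodos2014DensityCS} provide a fully elementary (hence $\RCA_0$-provable) \emph{finitary density Ordered Variable Word theorem}, which will play here the role that \Cref{cor:finite-ovw-letter} played in the proof of \Cref{thm:piecewise-syndetic-ovw}.

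The main technical lemma to isolate is a density-preserving single-step extension: \emph{if $D \subseteq A^{<\omega}$ has positive upper density, then there exist an OVW-line $S \subseteq A^{<\omega}$ and a set $Q \subseteq A^{<\omega}$ of positive upper density such that $S(0) \subseteq D$ and $S(1)\cdot Q \subseteq D$.} To establish this, I would argue by contradiction exactly as in \Cref{lem:coloring-thick-left}: assuming failure, I would inductively build a sequence of left variable words $w_n = x_0 \sigma_n$ such that, for every OVW-line $S$ of the appropriate size with $S(0)$ in (a density-refinement of) $D$ and every $b \in A$, the pair $S(0), S(1)\cdot w_n[b]$ cannot land in $D$ simultaneously with the required density on the right. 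Partition regularity of positive upper density (proved in \Cref{sect_notation}) ensures that the required case split at each stage is legitimate. Composing the $w_n$'s yields an embedding $h : A^{<\omega} \to A^{<\omega}$ along which the density of $D$ pulls back to a positive-upper-density subset of $A^{<\omega}$; applying the finitary density OVW of Dodos--Kanellopoulos--Tyros to this pullback yields a monochromatic finite OVW-line that, transported by $h$, contradicts the assumed failure.

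Once this density analog of \Cref{lem:coloring-thick-left-ps} is in hand, the proof of the theorem is a verbatim iteration of the proof of \Cref{thm:piecewise-syndetic-ovw}: starting with $D_0 = D$ of positive upper density, recursively apply the lemma to obtain OVW-trees $T_0 \subseteq T_1 \subseteq \cdots$ together with left variable words $w_s$ and sets $D_s$ of positive upper density such that $T_s \subseteq D$ and $T_s(s) \cdot w_s[A] \cdot D_s \subseteq D$. The union $T = \bigcup_s T_s$ is then an OVW-tree of dimension $\omega$ inside $D$, generated by the $\omega$-variable word $w_0 w_1 w_2 \cdots$ (after relabelling variables).

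The main obstacle is the verification of the density-preserving lemma in a system weak enough that \Cref{rem:full-induction-aca} applies. The finitary density OVW of \cite{Dodos2014DensityCS} uses density-increment arguments whose natural formalization involves more induction than $\RCA_0$ affords; I expect it can be carried out in $\RCA + \ISig_2$ (as was the case for Brown's lemma and \Cref{lem:coloring-thick-left}), after which the $\Pi^1_2$ form of both the finitary statement and the density version of \Cref{lem:coloring-thick-left-ps} will let us pull the whole construction back to $\ACA_0$ via the countable coded $\omega$-model argument of \Cref{rem:full-induction-aca}. The iteration step itself is purely $\ACA_0$, as it only requires the existence of sequences of sets of positive upper density, whose definition is arithmetical.
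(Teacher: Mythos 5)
Your high-level plan matches the paper's: both you and the paper isolate the density analogue of \Cref{lem:coloring-thick-left-ps} (an OVW-line $S$ and a positive-upper-density $Q$ with $S(0)\subseteq D$ and $S(1)\cdot Q\subseteq D$), and both iterate that lemma exactly as in the proof of \Cref{thm:piecewise-syndetic-ovw}. The iteration step is fine. The gap is in your proposed proof of the key lemma.

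You plan to prove the single-step lemma by contradiction, \emph{exactly} as in \Cref{lem:coloring-thick-left}, using partition regularity of density to drive the case split, building left variable words $w_n=x_0\sigma_n$, composing them into an embedding $h$, and applying a finitary density OVW to the pullback $h^{-1}(D)$. This transposition breaks at two essential points. First, the choice of $\sigma_n$ in \Cref{lem:coloring-thick-left} is not a consequence of partition regularity alone: it uses the thickness--syndeticity duality, namely that if the ``good'' set $K$ is not piecewise syndetic inside the ambient thick set $I$ then $J=I\setminus K$ is thick, so one can pick $\sigma_n$ with $A^{\leq N_{n-1}+1}\cdot\sigma_n\subseteq J$. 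Positive upper density has no such dual: if $K$ fails to have positive upper density, all you learn is that $D\setminus K$ does, and a set of positive upper density contains no full blocks $A^{\leq m}\cdot\sigma$ in general. So the $\sigma_n$'s cannot be constructed this way. Second, even if some sequence $w_n$ were produced, the pullback $h^{-1}(D)$ along $h(a_0\cdots a_{m-1})=a_0\sigma_0\cdots a_{m-1}\sigma_{m-1}$ need not have positive upper density: $h$ maps $A^m$ onto a single pattern-class of words of length $m+\sum_{i<m}|\sigma_i|$, and positive density of $D$ at those lengths gives no control over the density of $D$ restricted to that pattern-class. These are precisely the obstacles that force Dodos--Kanellopoulos--Tyros to prove a much sharper finitary result.

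The paper's actual proof of \Cref{lem_technical-density} is not by contradiction and does not construct an embedding. It takes as a blackbox \Cref{th:finitary-part-density-OVW} (DKT Proposition~7.5), which is a \emph{quantitative} finitary statement asserting directly that, from any large enough family of levels on which $D$ has density $>\delta$, one can extract an OVW-line $S\subseteq D$ together with a further set of levels on which $D^S$ retains density $>\delta^2/(8\,\card{\mathrm{OVWLine}(L_0)})$. The derivation of \Cref{lem_technical-density} is then a finite pigeonhole: the number of possible OVW-lines with levels in a fixed finite $L_0$ is finite, so some single $S$ works for infinitely many target lengths $r$, and $\hat D = D^S$ restricted to those lengths has positive upper density. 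Your proposal would need to be restructured to use \Cref{th:finitary-part-density-OVW} as the blackbox, in which case the contradiction-and-embedding scaffolding is both unavailable and unnecessary.
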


As for piecewise syndeticity, positive upper density is partition regular, so \Cref{th:density-OVW} implies the Ordered Variable Word theorem.

\begin{corollary}[$\ACA_0$]
For every finite alphabet~$A$ and every finite partition~$C_0 \sqcup \dots \sqcup C_{\ell-1} = A^{<\nat}$,
there is some color~$i < \ell$ and an OVW-tree $T \subseteq C_i$ of dimension~$\omega$.
\end{corollary}
\begin{proof}
Let~$i < \ell$ be such that $\limsup_ r\dens_{A^ r}(C_i)>0$. By \Cref{th:density-OVW},
there is an  OVW-tree $T \subseteq C_i$ over $A$ of dimension~$\omega$.
\end{proof}

The OVW-tree $T$ of dimension $\omega$ of \cref{th:density-OVW} will be constructed as the
union
of a sequence of OVW-tree of finite but increasing dimension.
The ``finitary part'' of the proof of \cref{th:density-OVW} is the following theorem, which is a direct consequence of {\cite[Proposition 7.5]{Dodos2014DensityCS}}.
For a set $D\subseteq A^{<\omega}$,
a finite OVW-line $S$, let
$D^S=\{\sigma: S(1)\cdot\sigma\subseteq D\}$.
\begin{theorem}[{\cite[Proposition 7.5]{Dodos2014DensityCS}}, $\RCA$]\label[theorem]{th:finitary-part-density-OVW}
  For all $k\in\Nb$ and alphabet $A$ of size $k$, for any $\delta>0$,  there exists $N_0=N_0(k,\delta)$
 such that for any $n\in\omega$, there exists $N_1=N_1(k,n,\delta)$ such that
   the following is true.
   For any $L_0<L_1\subseteq\Nb$ with $|L_0|>N_0$ and $|L_1|>N_1$, and for any $D$ with
    $\dens_{A^ r}(D)>\delta$ for all
     $  r\in  L=L_0\cup L_1$,  there exists an OVW-line $S\subseteq D$ with $\Lc(S)\subseteq L_0$, and a set $L'\subseteq L_1 $ such that:
  \begin{enumerate}
  \item $L'$ is sufficiently big:  $\card {L'}\geq n$, and
  \item $D^S$ is of sufficiently big density, at the length inside $L'$: for any $  r\in L'$, $$\dens_{A^ {r-\size S}}(D^S)>\frac{\delta^2}{8\times\card{\mathrm{OVWLine}(L_0)}}$$ where $\mathrm{OVWLine}(L_0)$ is the finite set of OVW-lines with set of levels included in $L_0$.
  \end{enumerate}
\end{theorem}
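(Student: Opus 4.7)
The plan is to decompose the theorem into two independent pieces: (i) a per-level density-increment step, and (ii) a pigeonhole step that extracts a common OVW-line from many levels. Piece (i) is the content of \cite[Proposition 7.5]{Dodos2014DensityCS}; piece (ii) is elementary finite combinatorics.

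First I would fix $r \in L_1$ and, using the density hypothesis at every level in $L_0 \cup \{r\}$, invoke \cite[Proposition 7.5]{Dodos2014DensityCS} to produce an OVW-line $S_r$ with $\mathcal{L}(S_r) \subseteq L_0$, $S_r \subseteq D$, and
\[
\dens_{A^{r - |S_r|}}(D^{S_r}) > \frac{\delta^2}{8 \cdot |\mathrm{OVWLine}(L_0)|}.
\]
The proof of that proposition is a Cauchy-Schwarz-type averaging over pairs $(S,\sigma)$ with $S \in \mathrm{OVWLine}(L_0)$ and $\sigma$ a potential extension to length $r$; the threshold $N_0(k,\delta)$ is exactly the size needed for $L_0$ to make this averaging yield a non-trivial density bound.

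Next I would observe that the map $r \mapsto S_r$ takes values in the finite set $\mathrm{OVWLine}(L_0)$. After replacing $L_0$ by one of its subsets of size exactly $N_0(k,\delta)$, which only improves the bound of the previous step since $|\mathrm{OVWLine}(L_0)|$ then becomes smaller, the cardinality $M := |\mathrm{OVWLine}(L_0)|$ becomes an explicit function of $k$ and $\delta$. Setting $N_1(k,n,\delta) := n \cdot M$, the pigeonhole principle yields an OVW-line $S$ with $L' := \{r \in L_1 : S_r = S\}$ of cardinality at least $n$, and this $(S, L')$ is the required witness.

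The main obstacle I anticipate is the quantitative averaging inside step (i), which genuinely exploits the combinatorial structure of OVW-lines to transfer density from the levels of $L_0$ to the far-away level $r$; for this I will simply cite \cite{Dodos2014DensityCS}. Everything else --- the reduction of $L_0$ to a fixed size, the pigeonhole, and the bookkeeping of the thresholds $N_0$ and $N_1$ --- is purely finite arithmetic over rationals and formalizes in $\RCA$ without difficulty.
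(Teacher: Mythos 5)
The paper does not actually prove this statement: it treats \Cref{th:finitary-part-density-OVW} as a blackbox, citing it as a direct consequence of \cite[Proposition 7.5]{Dodos2014DensityCS} and explicitly declining to spell out the finite combinatorics, so there is no in-paper argument to compare against. Your proposal tries to supply the missing derivation, but it has a concrete gap in the pigeonhole step. You assert that after trimming $L_0$ to cardinality $N_0(k,\delta)$ the quantity $M := |\mathrm{OVWLine}(L_0)|$ ``becomes an explicit function of $k$ and $\delta$.'' This is false: $\mathrm{OVWLine}(L_0)$ is the set of actual OVW-lines with levels in $L_0$, not the set of level-pairs. A line with levels $\{a,b\}$, $a<b$, is determined by a base word in $A^a$ together with a $1$-variable word of length $b-a-1$, so there are $k^a(k+1)^{b-a-1}$ of them; hence $M$ depends on the numerical sizes of the elements of $L_0$, not merely on $|L_0|$, and is unbounded even with $|L_0|$ fixed. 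Since the theorem quantifies $N_1=N_1(k,n,\delta)$ \emph{before} $L_0$ and $L_1$, the choice $N_1 := n\cdot M$ is not admissible, and pigeonholing a single $S_r$ per $r$ over the codomain $\mathrm{OVWLine}(L_0)$ cannot deliver an $L_0$-independent $N_1$.

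To repair this, the averaging you defer to the reference needs to return more than one good line per $r$: for each $r\in L_1$ one wants a subset of $\mathrm{OVWLine}(L_0)$ of relative density bounded below by a function of $\delta$ alone (on the order of $\delta^2/8$), all of whose members satisfy the density increment at level $r$. A double count over pairs $(S,r)$ then yields a single $S$ good for a $\delta^2/8$-fraction of the $r\in L_1$, so $N_1(k,n,\delta)\approx 8n/\delta^2$ works, independently of $L_0$. Your proposal hands off exactly this averaging to \cite[Proposition 7.5]{Dodos2014DensityCS} but keeps only one witness from it, and that is precisely what breaks the required uniformity. (In the paper's sole application of the theorem, in the proof of \Cref{lem_technical-density}, $L_0$ is fixed first and only $n=1$ is used with an infinite pigeonhole, so your argument would suffice there; but it does not establish the displayed statement.)
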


In order to study the reverse mathematics of \Cref{th:density-OVW}, we shall consider \Cref{th:finitary-part-density-OVW} as a blackbox.
Its proof involves an elaborate, but finite, combinatorial machinery, which is elementary from a logical viewpoint.
\Cref{th:finitary-part-density-OVW} for~$n = 1$ implies the following lemma, which is the analog of \Cref{lem:coloring-thick-left-ps}, with piecewise syndetic largeness replaced by positive upper density.


\begin{lemma}[$\RCA$]\label[lemma]{lem_technical-density}
Let $k$ and alphabet $A$ of size $k$. For every positive upper density set $D\subseteq A^{<\omega}$,
  there is an OVW-line~$S$, a positive upper density set~$\h D \subseteq A^{<\nat}$
 such that
$S(0) \subseteq D$ and
$S(1) \cdot  \h D \subseteq D$.
\end{lemma}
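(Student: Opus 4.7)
The strategy is to reduce to the finitary Theorem~\ref{th:finitary-part-density-OVW} with $n = 1$, then extract a single good OVW-line by an infinite pigeonhole argument over the finite set of candidates with levels in a fixed finite set. First, since $D$ has positive upper density, I would fix a rational $\delta > 0$ such that $L := \{r \in \omega : \dens_{A^r}(D) > \delta\}$ is infinite. Let $N_0 = N_0(k,\delta)$ and $N_1 = N_1(k,1,\delta)$ be the parameters supplied by Theorem~\ref{th:finitary-part-density-OVW}, let $L_0$ be the first $N_0 + 1$ elements of $L$, and set $\delta' := \delta^2/(8 \cdot |\mathrm{OVWLine}(L_0)|) > 0$. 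Note that $\mathrm{OVWLine}(L_0)$ is a finite set, since every OVW-line with levels included in $L_0$ has size at most $\max(L_0)$.

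I would then iteratively pick finite sets $L_1^{(0)}, L_1^{(1)}, \ldots \subseteq L$, each of cardinality $> N_1$, with $\min L_1^{(m)}$ strictly greater than $\max(L_0 \cup L_1^{(m-1)})$. Applying Theorem~\ref{th:finitary-part-density-OVW} to each pair $(L_0, L_1^{(m)})$ produces an OVW-line $S_m \subseteq D$ with $\mathcal{L}(S_m) \subseteq L_0$ and some $r_m \in L_1^{(m)}$ such that $\dens_{A^{r_m - |S_m|}}(D^{S_m}) > \delta'$, where $D^{S_m} = \{\sigma : S_m(1)\cdot\sigma \subseteq D\}$. By the way the $L_1^{(m)}$'s were placed, $r_m \to \infty$.

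Because $\mathrm{OVWLine}(L_0)$ is finite, infinite pigeonhole yields a single OVW-line $S \in \mathrm{OVWLine}(L_0)$ and an infinite set $M \subseteq \omega$ with $S_m = S$ for every $m \in M$. Setting $\hat{D} := D^S = \{\sigma : S(1)\cdot\sigma \subseteq D\}$, the lengths $\{r_m - |S| : m \in M\}$ are unbounded and $\dens_{A^{r_m - |S|}}(\hat{D}) > \delta'$ along them, hence $\limsup_\ell \dens_{A^\ell}(\hat{D}) \geq \delta' > 0$, so $\hat{D}$ has positive upper density. Finally, $S(0) \subseteq S \subseteq D$ follows from the theorem, and $S(1) \cdot \hat{D} \subseteq D$ by definition of $D^S$. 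I do not expect a serious obstacle: the finitary theorem bears all of the combinatorial weight, and the only care needed is to arrange the $L_1^{(m)}$'s so that the witnessing lengths escape to infinity after the pigeonhole, which is what guarantees the $\limsup$ remains positive.
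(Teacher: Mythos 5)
Your proposal is correct and follows essentially the same route as the paper: fix $\delta$ and a finite $L_0 \subseteq L$ with $|L_0| > N_0$, invoke Theorem~\ref{th:finitary-part-density-OVW} with $n=1$, and pigeonhole over the finite set $\mathrm{OVWLine}(L_0)$ to extract a single OVW-line $S$ whose $D^S$ witnesses positive upper density. Your blocking of $L$ into successive finite chunks $L_1^{(m)}$ of size $> N_1$ is a clean, explicit way to organize the repeated applications of the finitary theorem; the paper compresses this into the remark that at most $N_1$ many $r$ can lack a witnessing $S_r$, but the underlying argument is the same.
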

\begin{proof}
Let~$\delta > 0$ be such that the set $L_1 = \{ r \in \omega : \dens_{A^r}(D) > \delta \}$ is is infinite.
Let~$N_0 = N_0(k, \delta)$ and $N_1 = N_1(k, 1, \delta)$, and let~$\hat{\delta} = \delta^2/(8\times\card{\mathrm{OVWLine}(L_0)})$.
For each~$r \in L_1$, let (if it exists) $S_r\subseteq D$ by an OVW-line satisfying $\Lc(S)\subseteq L_0$
and $\dens_{A^{r-\size S_r}}(D^{S_r})>\h\delta$.
By \Cref{th:finitary-part-density-OVW}, there are at most $N_1$ many $r$ for which $S_r$ does not exist. Since $L_0$ is finite,
by the infinite pigeonhole principle, there is an OVW-line $S$ so that the set $L = \{r\in L_1: S_r = S\}$ is infinite.
The set $L$ is computable and $S$ and $L$ can both be found uniformly in any~$\emptyset'$-PA degree.
\end{proof}

We are now ready to prove \Cref{th:density-OVW}.

\begin{proof}[Proof of \Cref{th:density-OVW}]
It is similar to the proof of Theorem \ref{thm:piecewise-syndetic-ovw} mutatis mutandis,
using \Cref{lem_technical-density} instead of \Cref{lem:coloring-thick-left-ps} and positive upper density instead of piecewise syndeticity.
%
%
%
%
%
%
\end{proof}

As explained in \Cref{rem:full-induction-aca}, although the proofs of \Cref{th:finitary-part-density-OVW} and of \Cref{lem_technical-density} are over~$\RCA$, the proof of \Cref{th:density-OVW} holds in $\ACA_0$ with restricted induction.

\section{A proof of the Higher-Order Carlson-Simpson Lemma in $\aca^{+}_0$}\label[section]{sect-ho-csl-acap}

As mentioned in the introduction, many partition theorems admit higher-order counterparts. In the case of variable word theorems, an $n$-dimensional version consists of coloring $n$-variable words rather than words.
The two applications of the variable word theorems that we are going to study in \Cref{sect-cdrt-acap} and \Cref{sect_henson-graph} involve their higher-order counterparts.

There exists a simple inductive proof of the higher-order version of the Carlson-Simpson lemma.
The general idea consists considering the variables as part of the alphabet. Typically, the Carlson-Simpson lemma for dimension 1 and alphabet of size~$k$ uses $\omega$ applications the Carlson-Simpson lemma for dimension~0 and alphabet of size~$k+1$. The variables being treated as part of the alphabet, they will occur infinitely often in the solution. As a consequence, even using the Ordered Variable Word as the base statement, which is a stronger statement where each variable occurs finitely often, the resulting higher-order version yields only a solution to Carlson-Simpson's lemma.

The following inductive proof is standard in combinatorics. It essentially corresponds to the original proof of Carlson and Simpson~\cite[Section 2]{Carlson1984dual}. It was studied in the reverse mathematical setting by Dzhafarov, Flood, Solomon and Brown Westrick~\cite[Section 3.5]{Dzhafarov2017Effectiveness}. We include the proof of the sake of completeness.

\begin{theorem}\label[theorem]{thm:higher-order-csl-acap}
  For every $n \in \omega$, $\aca^{+}_0 \vdash \forall k \forall \ell \CSLD n k \ell$.
\end{theorem}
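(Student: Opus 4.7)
The plan is to proceed by external induction on the meta-variable $n$. The base case $n = 0$ is the Carlson-Simpson Lemma $\CSL{k}{\ell}$ itself, which follows from the Ordered Variable Word theorem established in Sections~\ref{sect_ovw-comb-aca} and~\ref{sect_ovw-aca} and therefore already holds in $\aca_0 \subseteq \aca^+_0$. The inductive step, which is the real content, is to show that the schema $\aca^+_0 \vdash \forall k' \forall \ell'\, \CSLD{n}{k'}{\ell'}$ implies $\aca^+_0 \vdash \forall k \forall \ell\, \CSLD{n+1}{k}{\ell}$.

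For the inductive step, I fix a coloring $c : A^{<\omega, n+1} \to \ell$ with $|A| = k$ and construct the desired $\omega$-variable word $W$ over $A$ in $\omega$ stages, at each stage appending a finite block of $W$ ending with a fresh occurrence of the new variable $x_s$. The key idea, going back to Carlson and Simpson, is to treat the topmost variable $x_n$ of an $(n+1)$-variable word as an additional constant letter $\varp$, so that extending a finite prefix of $W$ in a way that controls $c$ reduces to coloring $n$-variable words over the enlarged alphabet $A \cup \{\varp\}$. Concretely, at stage $s$ with current prefix $V_s$ already fixed, I would define an auxiliary coloring $c_s : (A \cup \{\varp\})^{<\omega, n} \to L_s$ with $L_s$ a fixed finite set whose value on a word $w$ records the tuple of colors that $c$ assigns to the finitely many $(n+1)$-variable completions obtainable by gluing $V_s$ to $w$, reinterpreting $\varp$ as $x_n$ where appropriate. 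Applying the induction hypothesis $\CSLD{n}{k+1}{|L_s|}$ to $c_s$ yields an $\omega$-variable word over $A \cup \{\varp\}$ whose $n$-variable instantiations are $c_s$-monochromatic; the next block of $W$ is then read off an initial segment of this word up to its first occurrence of $\varp$, reinterpreted as the next variable $x_s$ of $W$. After $\omega$ stages, a pigeonhole over the $\omega$-sequence of stage-colors, all in the fixed finite set $\ell$, thins $W$ to a subword on which $c$ is genuinely constant.

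The principal obstacle will be the bookkeeping. One must ensure that the family of ``shapes'' produced by gluing a fixed prefix $V_s$ to a candidate $n$-variable word over $A \cup \{\varp\}$ is finite and computable uniformly in $s$, so that $L_s$ is a genuinely finite set and the stage-$s$ invocation of $\CSLD{n}{k+1}{|L_s|}$ is well-posed, and one must verify that the local color-coherence enforced at each stage really does propagate to global color-constancy on all $(n+1)$-variable instantiations of the limit $\omega$-variable word $W$. The verification in $\aca^+_0$ rests on the fact that each stage is a single application of a $\Pi^1_2$ theorem that is provable in $\aca^+_0$ by the induction hypothesis, while the $\omega$-iteration itself is supported by the existence of $X^{(\omega)}$ for every set $X$, which is precisely what $\aca^+_0$ provides.
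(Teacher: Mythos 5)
Your high-level plan---external induction on $n$, treating a variable as an extra constant letter to drop the dimension, iterating $\omega$ times and using $\aca^+_0$ to carry the iteration---matches the paper's proof in outline, but the actual decomposition is different and, as written, has a genuine gap at the end. The paper's inductive step first builds a \emph{prehomogeneous} $\omega$-variable word $W_\omega$: for every $s\in A^{<\omega,n}$ and every $t\in A^{<\omega,n+1}$ with $s^\frown x_n\preceq t$, the color $f(W_\omega[t])$ depends only on $s$. This is achieved by iterating, over all $s\in A^{<\omega,n}$ (not over stages of a block-by-block construction), a one-step lemma that applies the \emph{zero-dimensional} CSL over the alphabet $A\sqcup\{x_0,\dots,x_n\}$ of size $k+n+1$, substituting the resulting $\omega$-variable word into the whole \emph{tail} of $W$, not just reading off a finite initial block. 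Prehomogeneity then yields a residual coloring $g:A^{<\omega,n}\to\ell$, and one concludes with a final genuine application of $\CSLD{n}{k}{\ell}$.

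The gap in your sketch is precisely in the last sentence. After your $\omega$ stages, the remaining obstacle is a coloring indexed by $n$-variable words (which prefix $s\in A^{<\omega,n}$ you are sitting above), not by natural numbers; collapsing it requires $\CSLD{n}{k}{\ell}$, not a pigeonhole over an $\omega$-sequence of ``stage colors.'' Even for $n=0$ the residual application is $\CSL{k}{\ell}$, itself an infinitary partition theorem, so ``a pigeonhole over the $\omega$-sequence of stage-colors'' is not a valid substitute. A secondary concern is that you apply $\CSLD{n}{k+1}{|L_s|}$ and then keep only ``an initial segment of this word up to its first occurrence of $\varp$'': the CSL output is an $\omega$-variable word over $A\cup\{\varp\}$, which need not contain $\varp$ at all, and in any case truncating to a finite prefix discards the monochromaticity guarantee that is exactly the content of the application. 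The paper avoids both problems by (i) substituting the entire CSL output into the tail of $W$ (so the guarantee is retained in every subsequent refinement), and (ii) explicitly invoking $\CSLD{n}{k}{\ell}$ on the residual coloring $g$ after prehomogeneity. You would need to replace your closing pigeonhole with that residual application, and rework the stage construction so that each step is a refinement of the tail rather than a committed finite block, at which point you would essentially be reproducing the paper's argument.
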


The proof uses induction on~$n$. 
   Let $A$ be an alphabet of size $k$ ; when we say $\omega$-variable word, it means over $A$ unless claimed otherwise. We will use the notion of \emph{prehomogeneous} $\omega$-variable word (see Definition \ref{def-prehomo}) to reduce an $\ell$-coloring of $A^{<\omega,n+1}$ to an $\ell$-coloring of $A^{<\omega,n}$.

    
  \begin{definition}\label{def-prehomo}
An $\omega$-variable word $W$ over $A$ is \emph{prehomogeneous} for a coloring $f : A^{<\omega,n+1} \to \ell$ if for every $s\in A^{<\omega,n}$ and $t_0, t_1\in A^{<\omega,n+1}$ such that $s^\frown x_n$ is prefix of both $t_0$ and $t_1$, $f(W[t_0])=f(W[t_1])$.
\end{definition}

For $\omega$-variable words $W,\h W$ and $m\in\omega$,
we write $\h W\leq_m  W$ iff $\h W = W[V]$
where $V $ is an $\omega$-variable word so that $z_0\cdots z_{m-1}\prec V$.
Clearly $\leq_m$ is transitive and $\leq_{m+1} $ implies $\leq_m$.
%
%

    \begin{lemma}[$\ACA_0$]\label{lem:one-step-hocsl-acap}
    Let $W$ be an $\omega$-variable word, and $s\in A^{<\omega,n}$. There exists an $\omega$-variable word $\h W\leq_{|s|+1}W$ and a color $i$ such that for every  
     $t\in A^{<\omega,n+1}$ with $s^\frown x_n\preceq t$, we have $f(\h W[t])=i$. 
  \end{lemma}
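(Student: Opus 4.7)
My plan is to reduce this one-step prehomogeneity to the zero-dimensional Carlson-Simpson Lemma applied to a derived coloring over an enlarged alphabet, then translate the solution back into an $\omega$-variable word over $A$. Since $s \in A^{<\omega,n}$ already contains each of $x_0,\ldots,x_{n-1}$ and $s^\frown x_n$ additionally contains $x_n$, every $t \in A^{<\omega,n+1}$ with $s^\frown x_n \preceq t$ factors uniquely as $t = s^\frown x_n^\frown u$ for some $u \in (A')^{<\omega}$, where $A' := A \sqcup \{x_0,\ldots,x_n\}$. The goal therefore reduces to finding $V$ of the form $x_0 x_1 \cdots x_{|s|} v$ such that, setting $\hat W := W[V]$, the coloring $u \mapsto f(\hat W[s^\frown x_n^\frown u])$ is constant on $(A')^{<\omega}$.

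I would define $g : (A')^{<\omega} \to \ell$ by $g(\alpha) := f(W[s^\frown x_n^\frown \alpha])$ and apply the zero-dimensional Carlson-Simpson Lemma over the alphabet $A'$ (provable in $\ACA_0$ by the previous sections) to obtain an $\omega$-variable word $V^*$ over $A'$, with fresh variables $y_0, y_1, \ldots$ distinct from the elements of $A'$, and a color $i < \ell$ such that $g(V^*[\alpha]) = i$ for every $\alpha \in (A')^{<\omega}$.

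To translate $V^*$ into a tail $v$ that lives inside an $\omega$-variable word over $A$: for each $j < n$, fix a position $p_j < |s|$ with $s(p_j) = x_j$, which exists since $s$ is an $n$-variable word, and set $p_n := |s|$. Replace every occurrence of the letter $x_j \in A'$ inside $V^*$ by the variable $x_{p_j}$, replace every variable $y_k$ by the variable $x_{|s|+1+k}$, and leave letters of $A$ untouched; this yields a word $v$ over $A$ with variables from $\{x_m : m \in \omega\}$. Taking $V := x_0 x_1 \cdots x_{|s|} v$ gives a valid $\omega$-variable word (every $x_m$ appears, and first occurrences remain in correct order), and $\hat W := W[V]$ satisfies $\hat W \leq_{|s|+1} W$.

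For the verification, given $t = s^\frown x_n^\frown u$, the associativity of substitution gives $\hat W[t] = W[V[t]] = W[s^\frown x_n^\frown v[t]]$. The choice of the $p_j$'s was made precisely so that $v[t] = V^*[u]$: each $x_{p_j}$ in $v$ is substituted by $t(p_j) = x_j$, restoring the corresponding $A'$-letter of $V^*$, while each $x_{|s|+1+k}$ is substituted by $t(|s|+1+k) = u(k)$, exactly the $y_k$-substitution in $V^*[u]$. Hence $f(\hat W[t]) = g(V^*[u]) = i$, completing the argument. The main obstacle is this last identity, which requires careful bookkeeping to reconcile the two roles played by the symbols $x_0,\ldots,x_n$: letters of $A'$ inside $V^*$ versus variables of the $\omega$-variable word $V$ in the ambient construction.
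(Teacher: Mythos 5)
Your proposal is correct and follows essentially the same route as the paper's proof: extend the alphabet to $A' = A\sqcup\{x_0,\ldots,x_n\}$, apply the zero-dimensional Carlson-Simpson Lemma to the derived coloring $\alpha\mapsto f(W[s^\frown x_n{}^\frown\alpha])$, and then rename the $A'$-letters $x_j$ inside the resulting $\omega$-variable word to variable slots of $V$ that $t$ maps back to $x_j$ (the paper takes the first occurrence of $x_j$ in $s^\frown x_n$, you allow any; this choice is immaterial). Your write-up is in fact slightly more explicit than the paper's about the renaming of the fresh variables $y_k$ to $x_{|s|+1+k}$ and about why $V$ remains a well-formed $\omega$-variable word, but the underlying argument and bookkeeping are the same.
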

  \begin{proof}
    Let $\hat{A} = A \sqcup \{x_0, \dots, x_n\}$.
    We define $f_s : \hat{A}^{<\omega}\to \ell$ in the following way: for every $u \in \hat{A}^{<\omega}$, $f_s(u) = f(W[s^\frown x_n{}^\frown u])$.
    Note that $f_s$ can be seen as an instance of $\CSL {k+n+1} \ell$. Let $U$ be an $\omega$-variable word
     over $\h A$ with variable set $(y_n)_{n \in \NN}$ such that $f_s$ is constant, of value $i<\ell$ on $\{ U[u] : u \in \hat{A}^{<\omega} \}$.
    Replacing $(x_m:m\leq n)$ in $U$ by $(z_{j_m}:m\leq n)$ where $j_m$ is the first occurrence $x_m$
     in $s^\frown x_n$,
     we obtain an   $\omega$-variable word $\h U$ (over $A$ instead of over $\h A$);
     and
    let $\h W = W[z_0\cdots z_{|s|}^\frown \h U]$. 
      The motivation to use $\h U$ is that
     for every word $\h u$ over $\h A$, there is a word $u$ over $\h A$
     such that $\h W[s^\frown x_n^\frown \h u] = W[s^\frown x_n^\frown U[u]]$
     (this not necessarily true for other $\h s\in A^{<\omega,n}$).
   Therefore, 
   for every   $t\in A^{<\omega,n+1}$ with $s^\frown x_n\preceq t$,
    say $t= s^\frown x_n^\frown \h u $, we have, for some word $u$ over $\h A$,
    $$
    f(\h W[t]) = f(\h W[s^\frown x_n^\frown \h u]) = 
    f( W[s^\frown x_n^\frown U[u]]) = f_s(U[u])
      = i.
    $$
    Clearly $\h W\leq_{|s|+1} W$ and is an $\omega$-variable word (over $A$).
    Thus we are done.
  \end{proof}

We are now ready to prove \Cref{thm:higher-order-csl-acap}.

\begin{proof}[Proof of \Cref{thm:higher-order-csl-acap}]
The proof uses induction on $n$. The case $n = 0$ is a direct consequence of \Cref{th:density-OVW}, since an ordered $\omega$-variable word is a particular case of $\omega$-variable word. Assume $\forall k \forall \ell \CSLD n k \ell$ holds and fix $k$, $\ell$.
    We prove $\CSLD {n+1} k \ell$. 
    
Let $A$ be an alphabet of size $k$ and fix a coloring $f : A^{<\omega,n+1} \to \ell$.
  Iterating \Cref{lem:one-step-hocsl-acap}, there exists a sequence of $\omega$-variable words
   $W=W_0\geq_{n+1} W_1\geq_{n+2} W_2\geq_{n+3}\cdots$
    as well as a coloring on words $g:A^{<\omega,n}\to\ell$  such that
   for every $m\in\omega$, every $s\in A^{<\omega,n}$, every $t\in A^{<\omega,n+1}$ with 
   $s^\frown x_n\preceq t$ and $|s| = n+m$, we have
   $f(W_{m+1}[t]) = g(s)$;
   which implies $f(W_{\h m}[t]) = g(s)$
   for all $\h m\geq m+1$.
   It is easy to see that the limit $W_\omega= \lim_m W_m$ exists
   (by definition of $\leq_m$).
   Now for every $s\in A^{<\omega,n}$ and $t\in A^{<\omega,n+1}$ with
   $s^\frown x_n\preceq t$, we have (let $\h m$ be sufficiently large),
   $$f(W_\omega[t]) = f(W_{\h m}[t])
    = g(s). $$
    That is, $W_\omega$ is  prehomogeneous for $f$.
  It remains to restrain the space so that $g$ is constant. Let $W'$ be the $\omega$-variable word given by the application of $\CSLD n k \ell$ to the coloring $g$, so that $g\circ W'$ is constant for some color $i < \ell$.
  Let $t \in A^{<\omega, n+1}$ and $s \in A^{<\omega,n}$ be such that $s{}^\frown x_n\preceq t$. Then  
  $$f(W_\omega[W'[t]]) = g(W'[s]) = i.$$ Thus, $W=W_\omega[W']$ is a witness of the theorem. This completes the proof of \Cref{thm:higher-order-csl-acap}.  
\end{proof}

When constructing
the prehomogeneous variable word $W_\omega$,
we invoke $\msf{CSL}(k,\ell)$ infinitely many times which blows up
the complexity. However, it is unknown whether 
$\msf{CSL}(k,\ell)$ holds in $\RCA_0$.
Were it provable in $\RCA_0$, then $\CSLD n k\ell$
and results on the Dual Ramsey Theorem and the Big Ramsey degree 
of universal triangle-free graph
would be provable in $\ACA_0$. It is not even known whether $\RCA_0$ implies $\msf{CSL}(2,\ell)$.
\begin{question}
[Joe Miller and Solomon, \cite{Miller2004Effectiveness}]
Does $\RCA_0$ imply $\msf{CSL}(k,\ell)$?
\end{question}



\section{A proof of the Open Dual Ramsey theorem in $\aca^{+}_0$}\label[section]{sect-cdrt-acap}

The first application of the Carlson-Simpson lemma is a dual version of Ramsey's theorem. This was actually the theorem which motivated the proof of the Carlson-Simpson theorem. 
While Ramsey's theorem is about finite colorings of the integers, the Dual Ramsey theorem is about finite colorings of colorings. 

For~$\alpha \in \omega^{+}$, we note $(\omega)^\alpha$ the class of partitions of $\omega$ in exactly $\alpha$ sets. Such a partition can be seen as a surjective ordered function from $\omega$ to $\alpha$. Therefore $(\omega)^\alpha$ inherits of a natural topology from $\alpha^\omega$. Given a partition $p \in (\omega)^\omega$ and $\alpha \in \omega^{+}$, we write $(X)^\omega$ for the class of $Y \in (\omega)^\alpha$ coarsening $X$, that is, for every~$n, m$ such that $X(m) = X(n)$, then $Y(m) = Y(n)$. Carlson and Simpson~\cite{Carlson1984dual} proved the following theorem:

\begin{theorem}[Borel Dual Ramsey theorem]
    For any $n,\ell \in \omega$, let $C_0 \cup ... \cup C_{\ell-1}$ be a partition of $(\omega)^n$ where each~$C_i$ is a Borel set. Then there is partition $X \in (\omega)^\omega$ such that $(X)^n \subseteq C_i$ for some~$i < \ell$.
\end{theorem}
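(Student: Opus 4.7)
The plan is to deduce the Borel Dual Ramsey theorem from the Higher-order Carlson-Simpson Lemma (\Cref{thm_ho-csl}) in two stages, following Carlson and Simpson's original strategy. In the first stage I would handle the clopen case. An element $Y \in (\omega)^n$ can be identified with an infinite ordered variable sequence using exactly $n$ variables (each appearing infinitely often, first occurrences in order), and an element $X \in (\omega)^\omega$ with an infinite ordered $\omega$-variable sequence. Under this identification, the coarsening relation $Y \in (X)^n$ corresponds exactly to substituting each variable of $X$ with one of $n$ target values. A clopen partition of $(\omega)^n$ is determined by a finite coloring of $n$-variable patterns truncated at some fixed length, so the clopen case reduces, via routine encoding, to $\CSLD n 1 \ell$.

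In the second stage I would extend from clopen to Borel by a Galvin--Prikry-style argument. Endow $(\omega)^\omega$ with an Ellentuck-style topology whose basic neighborhoods are $[s, X] = \{ Y \in (X)^\omega : s \prec Y \}$, where $s$ is a finite initial segment extendible to some element of $(X)^\omega$. Call $C \subseteq (\omega)^n$ \emph{completely Ramsey} if for every such neighborhood there exists $Y \in [s, X]$ whose $n$-coarsenings extending $s$ lie entirely inside or entirely outside $C$. The clopen case, iterated along finite stems $s$, shows that every metric-open set is completely Ramsey. A classical Galvin--Prikry argument then shows that the completely Ramsey sets form a $\sigma$-algebra, hence contain all Borel sets; specializing to $s = \epsilon$ yields the theorem.

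The main obstacle is closure of completely Ramsey sets under countable unions. Given completely Ramsey $(C_k)_{k \in \omega}$, one builds a fusion sequence $X = X_0 \supseteq X_1 \supseteq \cdots$ of coarsenings in which the first $k$ blocks of $X_{k+1}$ agree with those of $X_k$ and in which $X_{k+1}$ decides $C_k$ on $[s, X_{k+1}]$ for every short stem $s$ compatible with the initial segment of $X_k$ fixed so far; the diagonal partition $X_\omega$ whose $k$-th block equals that of $X_k$ belongs to $(\omega)^\omega$ and witnesses complete Ramseyness of $\bigcup_k C_k$. Verifying that this fusion terminates with a legitimate coarsening and that simultaneous decidability propagates to the union is the combinatorial heart of the proof, and is where the clopen case must be invoked at each finite stage to make progress.
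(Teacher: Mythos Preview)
The paper does not prove this theorem. It is stated as a result of Carlson and Simpson~\cite{Carlson1984dual} and then the discussion moves on; the paper's own work in \Cref{sect-cdrt-acap} concerns only the Open and Combinatorial versions and their placement in $\ACA_0^+$. So there is no ``paper's proof'' to compare your proposal against.

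That said, your outline is essentially the original Carlson--Simpson strategy from~\cite{Carlson1984dual}, so it is the right architecture. Two small slips are worth flagging. First, an element of $(\omega)^n$ is a surjection $\omega \to n$, and for finite $n$ the blocks need not all be infinite; your description ``each variable appearing infinitely often'' overstates the structure. Second, under the paper's conventions the clopen case corresponds to colorings of variable words over the \emph{empty} alphabet, i.e.\ to $\CSLD{n-1}{0}{\ell}$ (which the paper calls $\CDRT{n}{\ell}$), not $\CSLD{n}{1}{\ell}$; by the equivalence proved in \Cref{sect-cdrt-acap} this does not matter, but it is worth getting the indexing right. The Galvin--Prikry/Ellentuck step you describe is the genuine content of the Borel case and is where the work lies; your fusion sketch is on the right track, though the details (in particular, that fusion in the dual setting really does produce an element of $(\omega)^\omega$ and that ``deciding on every short stem'' can be achieved in finitely many steps at each stage) are nontrivial and are exactly what Carlson and Simpson carry out in~\cite{Carlson1984dual}.
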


The dual Ramsey theorem does not hold for arbitrary colorings. There exists in particular a 2-partition $C_0 \cup C_1$ of $(\omega)^2$ such that for all $X \in (\omega)^\omega$, neither $(X)^2 \subseteq C_0$ nor $(X)^2 \subseteq C_1$ (see \cite[Section 1.4]{Carlson1984dual}). On the other hand, the dual Ramsey theorem can be generalized to any coloring which admits the Baire property (see Pr\"omel and Voigt~\cite{proemel1985baire}).
From a mathematical viewpoint, it is well-known that Borel classes have the Baire property, hence the Baire version of the Dual Ramsey theorem implies its Borel version. However, from a computational and reverse mathematical viewpoint, the situation is more complicated. Indeed, the proof that Borel classes have the Baire property is non-trivial, and requires a careful analysis of the way to represent Borel classes in second-order arithmetic.

Dzhafarov, Flood, Solomon and Westrick~\cite{Dzhafarov2017Effectiveness} studied the reverse mathematics of the various versions of the Dual Ramsey theorem, namely, its restrictions to the Baire, the Borel, and the open colorings. They proved that the Borel Dual Ramsey theorem implies the Baire version, which is itself equivalent to the Open Dual Ramsey theorem. All these variants were proven to hold in~$\piooca_0$ by Slaman~\cite{Slaman1997note}. In their analysis, Dzhafarov and al.~\cite{Dzhafarov2017Effectiveness} reduced the Open Dual Ramsey theorem to a combinatorial statement, which they called the Combinatorial Dual Ramsey theorem:

\begin{theorem}[Combinatorial Dual Ramsey theorem]
    For any $n,\ell \geq 2$ let $C_0 \cup ... \cup C_{\ell-1}$ be a finite partition of $\emptyset^{<\omega, n-1}$, then there is a color $i < \ell$ and an $\omega$-variable word $W$ such that $\{W[u], u \in \emptyset^{<\omega, n-1}\} \subseteq C_i$
\end{theorem}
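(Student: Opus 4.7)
The plan is to reduce the Combinatorial Dual Ramsey theorem to the Higher-order Carlson-Simpson Lemma $\CSLD{n-1}{1}{\ell}$ of \Cref{thm:higher-order-csl-acap}, which holds in~$\aca^{+}_0$. The essential obstruction is that the Combinatorial Dual Ramsey theorem asks for an $\omega$-variable word $W$ over the \emph{empty} alphabet, whereas \Cref{thm:higher-order-csl-acap}, when applied to a singleton alphabet $A = \{a\}$, produces an $\omega$-variable word $V$ over~$A$ whose substitutions may contain occurrences of~$a$. The key trick is that one can simply delete these occurrences after the fact.

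More concretely, consider the letter-deletion map $\pi : A^{<\omega, n-1} \to \emptyset^{<\omega, n-1}$ that erases every occurrence of~$a$. This map is well-defined because removing letters preserves both the appearance of each variable~$x_i$ and the relative order of their first occurrences. I would first pull back the given partition $C_0 \cup \dots \cup C_{\ell-1}$ of $\emptyset^{<\omega, n-1}$ along~$\pi$ to obtain a partition of~$A^{<\omega, n-1}$, and then apply \Cref{thm:higher-order-csl-acap} to obtain a color~$i$ and an $\omega$-variable word $V$ over~$A$ whose substitutions all lie in $\pi^{-1}(C_i)$. Finally, I would let $W$ be the $\omega$-variable word over~$\emptyset$ obtained by deleting every occurrence of~$a$ in~$V$; since deletion also preserves the variable structure, $W$ is again a valid $\omega$-variable word.

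The main verification will be the identity $W[u] = \pi(V[u])$ for every $u \in \emptyset^{<\omega, n-1}$, viewed as a subset of $A^{<\omega, n-1}$. Since $u$ itself contains no~$a$, substituting $u(j)$ for the $j$-th variable of~$V$ introduces no new occurrences of~$a$, so the $a$'s of~$V[u]$ are exactly those inherited from the prefix of~$V$ used in the substitution; deleting them therefore yields the same word as substituting directly into~$W$. Once this identity is established, one obtains $W[u] = \pi(V[u]) \in C_i$ for every admissible~$u$, which completes the proof. The only delicacy will be keeping track of the cut points before and after deletion, but this follows immediately from the fact that deletion preserves the relative order of the variables of~$V$, so the cut before the first occurrence of the $|u|$-th variable aligns correctly in both~$V$ and~$W$.
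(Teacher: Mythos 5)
Your proof is correct, but it takes a detour that the paper avoids. The paper simply unwinds the definitions and observes that $\CDRT{n}{\ell}$ \emph{is} the statement $\CSLD{n-1}{0}{\ell}$ verbatim: an $n$-partition in the Dual Ramsey sense is an $(n-1)$-variable word over the empty alphabet, so the coloring, the $\omega$-variable word, and the monochromaticity condition all coincide. Since \Cref{thm:higher-order-csl-acap} is stated for \emph{all} alphabet sizes $k$ (including $k=0$, where the base case $\CSL{0}{\ell}$ is trivial and the inductive step passes to the nonempty alphabet $\hat A = \{x_0,\dots,x_n\}$), the result is immediate. You instead reduce the empty-alphabet statement to the singleton-alphabet statement $\CSLD{n-1}{1}{\ell}$ via the letter-deletion map $\pi$. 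The reduction is sound: the identity $W[u] = \pi(V[u])$ holds because $u$ contributes no new occurrences of $a$, deletion preserves the relative order of the variables, and hence the cut-point before the first occurrence of $x_{|u|}$ is preserved. The two approaches prove the same thing; the paper's is shorter because it recognizes the syntactic coincidence, while yours does some honest work and has the minor advantage of not requiring one to trust the degenerate $k=0$ base case of the inductive proof of \Cref{thm:higher-order-csl-acap}.
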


We will write $\ODRT{n}{\ell}$ the statement of the Open Dual Ramsey theorem for $\ell$-colorings of $(\omega)^n$ and  $\CDRT{n}{\ell}$ the statement of the Combinatorial Dual Ramsey theorem for $\ell$-colorings of $\emptyset^{<\omega, n-1}$.
Dzhafarov and al.~\cite{Dzhafarov2017Effectiveness} proved that $\ODRT{n}{\ell}$ and $\CDRT{n}{\ell}$ are equivalent over~$\RCA_0$. They proved $\CDRT{n}{\ell}$ from the Carlson-Simpson lemma inductively as in \Cref{sect-ho-csl-acap}, leaving the Carlson-Simpson lemma unproved, as a blackbox. The proof is optimal, in the following sense:

\begin{theorem}
	The following propositions are equivalent over~$\RCA_0$:
	\begin{enumerate}
	\item For all $n,k,\ell$, $\CSLD{n}{k}{\ell}$
	\item For all $n,\ell > 2$, $\CDRT{n}{\ell}$
	\end{enumerate}
\end{theorem}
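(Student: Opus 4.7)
The plan is to handle both directions via one-shot reductions carried out in $\RCA_0$.

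For $(1) \Rightarrow (2)$: given a coloring $f : \emptyset^{<\omega, n-1} \to \ell$, I would inflate it to an $\ell$-coloring $\tilde f$ of $\{a\}^{<\omega, n-1}$ via pullback along the letter-erasure map $\pi$ that removes every occurrence of $a$. Since $\pi$ preserves the variable structure (erasing letters does not affect which variables appear, nor the relative order of their first occurrences), $\tilde f$ is well-defined. Applying $\CSLD{n-1}{1}{\ell}$ yields an $\omega$-variable word $W$ over $\{a\}$ monochromatic for $\tilde f$; then $W' = \pi(W)$ is an $\omega$-variable word over $\emptyset$, and since $\pi$ commutes with substitution on inputs without the letter $a$ (so $\pi(W[u]) = W'[u]$ for $u \in \emptyset^{<\omega, n-1}$, because substitution only touches variable slots of $W$ while $\pi$ only touches letter slots, and the cut before the first unused variable is preserved), this gives the required $f$-monochromaticity of $\{W'[u] : u \in \emptyset^{<\omega, n-1}\}$.

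For $(2) \Rightarrow (1)$, the substantive direction: given $A = \{a_0, \ldots, a_{k-1}\}$ and $f : A^{<\omega, n} \to \ell$, the idea is to absorb the $k$ letters as $k$ extra ``frozen'' variables, presenting $A^{<\omega, n}$ as the image of $\emptyset^{<\omega, n+k}$ under a canonical map. To avoid notational clashes I denote the variables of $\emptyset^{<\omega, n+k}$ by $y_0, \ldots, y_{n+k-1}$, and define $\tau_0 : \emptyset^{<\omega, n+k} \to A^{<\omega, n}$ by $y_i \mapsto a_i$ for $i < k$ and $y_{k+j} \mapsto x_j$ for $j < n$; the ordering of first occurrences of $y_k, \ldots, y_{n+k-1}$ in $v$ transports to a valid first-occurrence ordering of $x_0, \ldots, x_{n-1}$ in $\tau_0(v)$, so $\tau_0$ is well-defined. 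I would then apply $\CDRT{n+k+1}{\ell}$ to $g = f \circ \tau_0$, obtaining an $\omega$-variable word $W$ over $\emptyset$ and a color $c$ with $g(W[v]) = c$ for every $v \in \emptyset^{<\omega, n+k}$, and extract $W'$ from $W$ by globally applying $\tau_0$ (extended by $y_{k+j} \mapsto x_j$ for all $j \geq 0$); the ordering constraints on $W$ guarantee $W'$ is an $\omega$-variable word over $A$.

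To conclude that $W'$ is $f$-monochromatic with color $c$, I would exhibit, for each $u \in A^{<\omega, n}$, a word $v_u \in \emptyset^{<\omega, n+k}$ of length $k + |u|$ defined by $v_u(i) = y_i$ for $i < k$ and $v_u(k + j) = \iota(u(j))$ for $j < |u|$, where $\iota(a_i) = y_i$ and $\iota(x_j) = y_{k+j}$, and verify the key identity $W'[u] = \tau_0(W[v_u])$; this yields $f(W'[u]) = g(W[v_u]) = c$ uniformly in $u$.

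The hard part will be the combinatorial bookkeeping around $v_u$: checking that $v_u$ actually lies in $\emptyset^{<\omega, n+k}$ (all of $y_0, \ldots, y_{n+k-1}$ must appear in $v_u$, with first occurrences correctly ordered — the $y_i$ with $i < k$ from the ``frozen prefix'' $v_u(0)\cdots v_u(k-1)$, and the $y_{k+j}$ with $j < n$ from the occurrences of $x_j$ in $u$, whose presence and ordering follow from $u \in A^{<\omega, n}$), and checking that the cut points line up, namely that the first occurrence of $y_{k+|u|}$ in $W$ corresponds under the relabeling $y_{k+j} \leftrightarrow x_j$ to the first occurrence of $x_{|u|}$ in $W'$. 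Once these alignments are in place, $W'[u] = \tau_0(W[v_u])$ falls out by unwinding the nested substitutions. Boundary cases where $n+k+1 \leq 2$ or $\ell \leq 2$ are either trivial in $\RCA_0$ or reducible to larger instances by padding with dummy colors.
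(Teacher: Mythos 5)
Your proof is correct and, for the substantive direction $(2)\Rightarrow(1)$, follows essentially the same route as the paper: freeze the $k$ letters as extra variables, pass through $\CDRT{n+k+1}{\ell}$, and unwind via the identity $W'[u]=\tau_0(W[v_u])$ — this is exactly the paper's $W[u]=\hat W[a_0\cdots a_{k-1}{}^\frown u]$, written out with more care about the relabeling. For $(1)\Rightarrow(2)$ you take an unnecessary detour through $\CSLD{n-1}{1}{\ell}$ and a letter-erasure map; the paper simply observes that $\CDRT{n}{\ell}$ \emph{is} $\CSLD{n-1}{0}{\ell}$ (the empty alphabet is a finite alphabet, so $k=0$ is covered by the universal quantifier in (1)), which gives the implication immediately with nothing to check.
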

\begin{proof}
	(1) clearly implies $(2)$ since $\CDRT{n}{\ell}$ is exactly $\CSLD{n-1}{0}{\ell}$. We just need to prove that $(2)$ implies (1). Let $n,k,\ell$ some integers, and we consider the statement $\CSLD{n}{k}{\ell}$. Let $A = \{a_0,...,a_{k-1}\}$ an alphabet of size $k$, and $C_0 \sqcup \dots \sqcup C_{\ell-1}$ a partition of $A^{<\omega,n}$. This induces a partition $\hat C_0 \sqcup \dots \sqcup \hat C_{\ell-1}$ of $\emptyset^{\omega,k+n}$ by replacing the first $k$ variables by a letter in $A$. Now, by using $\CDRT{k+n+1}{\ell}$, we have a color $i$ and an $\omega$-variable word $\hat W$ on alphabet $\emptyset$ such that $\{\hat W[u], u \in \emptyset^{<\omega, k+n}\} \subseteq \hat C_i$. Then consider the $\omega$-variable word $W = \hat W[a_0a_1...a_{k-1}x_0x_1...]$ on alphabet $A$. For all $u \in A^{<\omega,n}$, since the letters in $a_0a_1...a_{k-1}^\smallfrown u$ appear in order, this word can be seen as a ${k+n}$-variable word $\hat u \in \emptyset^{\omega,k+n}$. Therefore, since $\hat W[\hat u] \in \hat C_i$, we also have $\hat W[a_0a_1...a_{k-1}^\smallfrown u] = W[u] \in C_i$, meaning that $W$ is a solution to our instance of $\CSLD{n}{k}{\ell}$.
\end{proof}

Thanks to our new analysis of the Carlson-Simpson lemma, we prove that the Combinatorial Dual Ramsey theorem holds in~$\ACA^{+}_0$. It follows that the open and Baire versions of the Dual Ramsey theorem also hold in~$\ACA^{+}_0$, which is a dramatical improvement from the previous bound of $\piooca_0$.

\begin{corollary}
	For all $n \geq 2$, $\aca^{+}_0 \vdash \forall \ell \ODRT{n}{\ell}$
\end{corollary}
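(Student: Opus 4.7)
The plan is simply to chain three earlier results; no new combinatorial content is required. The first step is to invoke \Cref{thm:higher-order-csl-acap}, which gives $\ACA_0^{+} \vdash \CSLD{n}{k}{\ell}$ for all $n, k, \ell$. Specializing to alphabet size $k = 0$ and dimension $n - 1$ yields $\ACA_0^{+} \vdash \CSLD{n-1}{0}{\ell}$ for every $n \geq 2$ and every $\ell$. By the equivalence recorded in the theorem just preceding the corollary, $\CSLD{n-1}{0}{\ell}$ is literally the Combinatorial Dual Ramsey statement $\CDRT{n}{\ell}$, so we obtain $\ACA_0^{+} \vdash \CDRT{n}{\ell}$.

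The second step is to cite the reverse-mathematical reduction of Dzhafarov, Flood, Solomon and Westrick~\cite{Dzhafarov2017Effectiveness}, who established the equivalence $\CDRT{n}{\ell} \leftrightarrow \ODRT{n}{\ell}$ over $\RCA_0$. Composing the two steps gives $\ACA_0^{+} \vdash \ODRT{n}{\ell}$ for every $n \geq 2$ and every $\ell$, which is exactly the statement of the corollary.

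The only subtle point worth verifying is that the $k = 0$ case of \Cref{thm:higher-order-csl-acap} is genuinely covered by its proof. Tracing the induction: the reduction from $\CSLD{n+1}{0}{\ell}$ to $\CSLD{n}{0}{\ell}$ via \Cref{lem:one-step-hocsl-acap} goes through a single application of $\CSL{n+1}{\ell}$, whose alphabet $A \sqcup \{x_0, \dots, x_n\}$ has the positive size $n+1$ and is therefore handled by the main combinatorial content of \Cref{th:density-OVW}; the base case $\CSLD{0}{0}{\ell}$ colors the singleton $\emptyset^{<\omega} = \{\epsilon\}$ and is trivial. Hence no separate argument is needed for $k = 0$, and the chaining is immediate: the hard work has already been done in \Cref{thm:higher-order-csl-acap} and in \cite{Dzhafarov2017Effectiveness}.
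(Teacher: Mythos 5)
Your proof is correct and follows exactly the chain the paper intends: $\CSLD{n-1}{0}{\ell}$ (via \Cref{thm:higher-order-csl-acap}) is literally $\CDRT{n}{\ell}$, and the $\RCA_0$-equivalence $\CDRT{n}{\ell} \leftrightarrow \ODRT{n}{\ell}$ of Dzhafarov, Flood, Solomon and Westrick does the rest. Your extra check that the $k=0$ specialization of \Cref{thm:higher-order-csl-acap} is genuinely covered (the alphabet $\emptyset \sqcup \{x_0,\dots,x_n\}$ has positive size $n+1$, and the base case colors the singleton $\{\epsilon\}$) is a sensible sanity verification that the paper leaves implicit.
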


\section{Big Ramsey number of the universal triangle-free graph using variable words}\label[section]{sect_henson-graph}

Our second application comes from Structural Ramsey Theory.
We consider graphs as relational structures and denote by $\mathbb H_3$ the universal triangle-free graph, or the triangle-free Henson graph. Given integers $k, \ell \in \omega$, structures $\mathbf A, \mathbf B$ and $\mathbf C$ we write $\mathbf C \longrightarrow (\mathbf A)^{\mathbf B}_{k,\ell}$ for the following statement:

\begin{definition}
$\mathbf C \longrightarrow (\mathbf A)^{\mathbf B}_{k,\ell}$: For any partition $C_0 \cup ... \cup C_{k-1}$ of the embeddings of $\mathbf B$ in $\mathbf A$ there is an embedding $f: \mathbf C \mapsto \mathbf A$ such that all the embeddings of $\mathbf B$ in $f(\mathbf C)$ intersect at most $\ell$ many $C_i$'s.
\end{definition}

The \textit{big Ramsey degree} of $\mathbf B$ in $\mathbf A$ is the smallest $L \in \omega^{+}$ such that $\mathbf A \longrightarrow (\mathbf A)^{\mathbf B}_{k,L}$ for every $k \in \omega$, and a infinite structure $\mathbf A$ is said to have \textit{finite big Ramsey degrees} if for all finite substructure of $\mathbf B$, $\mathbf B$ has a finite big Ramsey degree in $\mathbf A$.
The study of big Ramsey degrees was initiated by Laver, who proved the order of rationals has finite big Ramsey degrees~\cite{devlin1979rationnals}.

Most recent results on big Ramsey degrees rely on a combinatorial core which is usually a tree partition theorem. The most famous one is Milliken's tree theorem, which was used by Devlin~\cite{devlin1979rationnals} in 1979 to give a precise characterization of big Ramsey degrees on the order of rationals, and by Laflamme, Sauer, and Vuksanovic~\cite{Sauer2006rado} in 2006 to characterize the big Ramsey degrees of the Rado graph. The reverse mathematics of Milliken's tree theorem and its applications were studied by Anglès d'Auriac, Cholak, Dzhafarov, Monin and Patey~\cite{2020millikens}, who proved that they all hold in~$\ACA_0$.

However, Milliken's tree theorem fails to serve as a combinatorial core to prove that some non-universal relational structures admit finite big Ramsey degrees.
In 2020, Dobrinen~\cite{dobrinen2020ramsey} proved that the triangle-free Henson graph admits finite big Ramsey degrees by proving a combinatorial statement about coding trees, using an involved notion of forcing.
More recently, Hubička~\cite{hubicka2020big} gave an alternative proof of Dobrinen's theorem using the higher-order Carlson-Simpson lemma as a combinatorial core. Although this proof is less accurate than the original proof of Dobrinen, it has the advantage of relying on a partition theorem with a known combinatorial proof. In this section, we analyse the proof by Hubička and show that it holds in~$\ACA^+_0$. The analysis is straightforward, but we include it for the sake of completeness.

By the usual back-and-forth argument, any computable copy of the triangle-free Henson graph is computably isomorphic. We will first enrich the set $\{0\}^{<\omega, 1}$ of variable words over the unary alphabet~$\{0\}$ with a symmetric irreflexive binary relation $E$ so that $(\{0\}^{<\omega, 1}, E)$ is a universal triangle-free graph, hence computably isomorphic to~$\mathbb H_3$.


\begin{definition}
  Let $E$ be the symmetric binary relation on $\{0\}^{<\omega, 1}$ defined as follows: for $v,w\in \{0\}^{<\omega, 1}$, $v E w$ if and only if $|v|\neq|w|$ and, assuming $|v|<|w|$:
  \begin{enumerate}
  \item Passing number property: $w(|v|)=x_0$
  \item Triangle-freeness condition: there is no $i<|v|$ with $v(i)=w(i)=x_0$.
  \end{enumerate}
	We write $\mathbb G$ the graph $(\{0\}^{<\omega, 1}, E)$
	\end{definition}

One can see the set~$\{0\}^{<\omega, 1}$ as the full binary tree $2^{<\omega}$ truncated from the set $\{0, 00, 000, \dots\}$. Based on this intuition, the previous construction is very similar to the one of Laflamme, Sauer, and Vuksanovic~\cite{Sauer2006rado} where they represent a Rado graph by the full binary tree with an edge relation based on the passing number property. The following lemma shows how the second property of the edge relation ensures that $\mathbb G$ is triangle-free.

\begin{lemma}
	$\mathbb G$ is triangle-free.
	\end{lemma}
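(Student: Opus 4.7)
The plan is to show that no three pairwise adjacent vertices can exist by using the passing number property at the shortest vertex as a witness against the triangle-freeness condition of the longest edge.

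Concretely, I would suppose for contradiction that $u, v, w \in \{0\}^{<\omega,1}$ form a triangle in $\mathbb{G}$. Since the relation $E$ requires strictly different lengths, the three words must have pairwise distinct lengths, so we may assume without loss of generality that $|u| < |v| < |w|$. From the edge $u E v$, the passing number property (item (1)) gives $v(|u|) = x_0$. From the edge $u E w$, the same property gives $w(|u|) = x_0$.

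Now consider the edge $v E w$. Since $|v| < |w|$, the triangle-freeness condition (item (2)) applied to this edge asserts that there is no $i < |v|$ with $v(i) = w(i) = x_0$. However, taking $i = |u|$, we have $i < |v|$ (because $|u| < |v|$) and $v(i) = v(|u|) = x_0 = w(|u|) = w(i)$ by the two conclusions above. This contradicts the triangle-freeness condition, so no such triangle exists.

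The argument is essentially a one-line observation, and no step appears to present an obstacle: the triangle-freeness condition in the definition of $E$ has been engineered precisely so that the shortest vertex of a potential triangle becomes a forbidden common-$x_0$ position for the edge between the two longer vertices. The only care needed is to apply the asymmetric definition of $E$ (which distinguishes the shorter and longer word) consistently, which is handled by fixing the ordering $|u| < |v| < |w|$ at the outset.
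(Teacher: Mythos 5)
Your proof is correct and follows essentially the same argument as the paper: take the shortest vertex, use the passing number property on its two edges to find a common position where both longer words have $x_0$, and derive a contradiction with the triangle-freeness condition on the edge between the two longer words. The only differences are cosmetic (variable names and a slightly more explicit justification that the lengths are pairwise distinct).
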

\begin{proof}
	Assume by contradiction that we have three words $s,t,u \in \{0\}^{<\omega, 1}$, forming a triangle. Without loss of generality, we assume $|s| < |t| < |u|$. Therefore we must have $t(|s|) = x_0$. But we also have $u(|s|) = x_0$, and this contradicts the triangle-freeness condition between $t$ and $u$.
\end{proof}

The triangle-freeness condition ensures that there will not be too many edges, the risk being that the resulting edge relation is too restrictive and that we lose universality. The following theorem shows that the edge relation is general enough.
	
\begin{theorem}[$\RCA_0$]
\label{csth0}
	$\mathbb G$ is the universal triangle-free graph.
	\end{theorem}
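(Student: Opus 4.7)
\textit{Plan.} The plan is to exhibit an induced embedding of every countable triangle-free graph into $\mathbb{G}$; combined with the triangle-freeness of $\mathbb{G}$ just established and a standard back-and-forth argument, this identifies $\mathbb{G}$ with the universal triangle-free graph $\mathbb{H}_3$. I would enumerate the target triangle-free graph as $H = (\{h_n : n \in \omega\}, \sim_H)$ and construct vertices $v_n = \phi(h_n) \in \{0\}^{<\omega, 1}$ recursively, with strictly increasing lengths, so that $v_i E v_j$ if and only if $h_i \sim_H h_j$ for all $i < j$.

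\textit{Construction.} Given $v_0, \ldots, v_{n-1}$ with $|v_0| < \cdots < |v_{n-1}|$, declare $|v_n| = |v_{n-1}| + 2$ and set $v_n(|v_i|) = x_0$ whenever $h_i \sim_H h_n$ and $v_n(|v_i|) = 0$ otherwise, for each $i < n$. I would then fill all remaining positions $k < |v_n|$ with $0$, except that if $h_n$ has no $\sim_H$-neighbor among $\{h_0, \ldots, h_{n-1}\}$ I place $v_n(|v_{n-1}|+1) = x_0$ to guarantee $v_n \in \{0\}^{<\omega,1}$. Thus the occurrences of $x_0$ in $v_n$ lie exactly at positions $|v_i|$ for $i < n$ with $h_i \sim_H h_n$, together with at most one additional occurrence at the reserved fresh position $|v_{n-1}|+1$ in the isolated case.

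\textit{Verification and main obstacle.} The passing number property $v_j(|v_i|) = x_0 \iff h_i \sim_H h_j$ is immediate from the construction. For the triangle-freeness clause in the definition of $E$, suppose $h_i \sim_H h_j$ with $i < j$: the positions $k < |v_i|$ at which $v_i(k) = x_0$ are exactly the $|v_\ell|$ for $\ell < i$ with $h_\ell \sim_H h_i$, and for every such $\ell$, the triangle-freeness of $H$ applied to $h_\ell \sim_H h_i \sim_H h_j$ forbids $h_\ell \sim_H h_j$, whence $v_j(|v_\ell|) = 0$ by construction. The possible extra occurrence of $x_0$ in $v_i$ at position $|v_i|-1$, arising only in the isolated case for $h_i$, does not coincide with any $|v_\ell|$, and $v_j$'s value at that position is $0$ by default, so the triangle-freeness clause is respected; conversely, $h_i \not\sim_H h_j$ defeats the passing clause directly. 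The main obstacle is precisely this compatibility verification: translating the global triangle-freeness of $H$ into local compatibility of the two defining conditions of $E$ at each stage of the recursion, which is where the triangle-free hypothesis on $H$ is essentially used.
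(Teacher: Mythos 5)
Your argument is the same in spirit as the paper's: embed a countable triangle-free graph $H$ into $\mathbb{G}$ by encoding the adjacency of $h_n$ to each earlier $h_i$ via $x_0$ or $0$ at the passing position $|v_i|$, then invoke triangle-freeness of $\mathbb{G}$ and uniqueness of the Henson graph. The paper takes $H = \mathbb{H}_3$ directly and sets $|\varphi(i)| = i$, which is actually too tight: $\varphi(0)$ is the empty word, and if a vertex $i$ happens to be non-adjacent to all of $\{0,\ldots,i-1\}$ then $\varphi(i)$ contains no $x_0$, so in either case $\varphi(i) \notin \{0\}^{<\omega,1}$. Your spacing $|v_n| = |v_{n-1}|+2$ with the reserved fresh slot $|v_{n-1}|+1$ for a guaranteed $x_0$ when $h_n$ is isolated among $\{h_0,\ldots,h_{n-1}\}$ is exactly what repairs this, and your observation that this slot can never coincide with a passing position $|v_\ell|$ (the two index sequences have opposite parity) is what keeps the triangle-freeness verification clean; just pin down the base case explicitly, say $v_0 = x_0$. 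One step you share with the paper and should not over-trust is the closing inference: an embedded copy of $\mathbb{H}_3$ inside the triangle-free graph $\mathbb{G}$ shows that $\mathbb{G}$ is universal, but a genuine back-and-forth identifying $\mathbb{G}$ with $\mathbb{H}_3$ would also require verifying the finite triangle-free extension property on the $\mathbb{G}$ side, which neither you nor the paper check.
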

\begin{proof}
	We define $\varphi : \mathbb H_3 \mapsto \{0\}^{<\omega, 1}$	 with for all $i, \varphi(i) = s$ where $s$ is such that $|s| = i$ and for all $j<i$ :
	\begin{itemize}
  \item $s(j) = x_0$ if there is an edge between $i$ and $j$ in $\mathbb H$
	\item $s(j) = 0$ otherwise
  \end{itemize}
	It is clear that $\varphi$ is an embedding $\varphi : \mathbb H_3 \mapsto \mathbb G$ therefore since $\mathbb G$ is triangle-free and by unicity of the universal triangle-graph, $\mathbb G$ is the universal triangle-free graph.
\end{proof}

The following observation shows that for every $\omega$-variable word~$W$,
the induced subgraph $(\{W[u] : u \in  \{0\}^{<\omega, 1} \}, E)$ remains universal:
\begin{lemma}[$\RCA_0$]
\label[lemma]{lem:subspace-still-universal}
Let~$W$ be an $\omega$-variable word. Then for every~$u, v \in \{0\}^{<\omega, 1}$, $u E v$ iff $W[u] E W[v]$.	
In particular, $(\{W[u] : u \in  \{0\}^{<\omega, 1} \}, E)$ is the universal triangle-free graph.
\end{lemma}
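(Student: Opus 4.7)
The plan is to verify directly from the definition of the edge relation that the substitution map $u \mapsto W[u]$ is a graph isomorphism between $\mathbb{G}$ and its image. The key bookkeeping device will be the sequence $p_0 < p_1 < p_2 < \dots$ of positions in $W$ of the first occurrences of $x_0, x_1, x_2, \dots$; this sequence is well-defined and strictly increasing because $W$ is an $\omega$-variable word.

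First I would check that $W[u] \in \{0\}^{<\omega, 1}$ when $u \in \{0\}^{<\omega, 1}$: since $u$ contains at least one occurrence of $x_0$, say $u(j) = x_0$, the position $p_j < p_{|u|}$ is kept when cutting $W[u]$, and at that position one reads $x_0$, so $W[u]$ is a genuine $1$-variable word over $\{0\}$. Next I would record the basic description of $W[u]$: its length is exactly $p_{|u|}$, and for each $i < p_{|u|}$ one has $W[u](i) = 0$ if $W(i) = 0$, and $W[u](i) = u(j)$ if $W(i) = x_j$ (necessarily $j < |u|$). In particular $W[u](p_j) = u(j)$ for every $j < |u|$, which shows that $u \mapsto W[u]$ is injective.

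The heart of the proof is the edge-preservation. Given $u, v$ with $|u| = k < \ell = |v|$, we have $|W[u]| = p_k < p_\ell = |W[v]|$, so the two conditions defining $E$ apply with $W[u]$ in the role of the shorter word. Condition $(1)$ asks whether $W[v](p_k) = x_0$; by the basic description above this amounts to $v(k) = x_0$, which is exactly Condition $(1)$ for $u E v$. Condition $(2)$ asks whether there is some $i < p_k$ with $W[u](i) = W[v](i) = x_0$; positions where $W$ carries a letter from $\{0\}$ are ruled out, so only positions where $W(i) = x_j$ for some $j < k$ matter, and the condition reduces to the existence of some $j < k$ with $u(j) = v(j) = x_0$. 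Since each such $j$ is witnessed by $i = p_j < p_k$, this is precisely Condition $(2)$ for $u E v$. Putting the two conditions together gives $u E v \iff W[u] E W[v]$.

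The ``in particular'' clause then follows at once: $u \mapsto W[u]$ is an injection from $\mathbb{G}$ onto the induced subgraph on $\{W[u] : u \in \{0\}^{<\omega, 1}\}$ which preserves and reflects the edge relation, hence a graph isomorphism. Since $\mathbb{G}$ is the universal triangle-free graph by \Cref{csth0}, so is its isomorphic copy, which is exactly the induced subgraph. I do not anticipate a serious obstacle; the only point that requires mild care is handling the ``cut before the first $x_{|u|}$'' clause so that the edge-condition comparison takes place at positions that lie in both $W[u]$ and $W[v]$, and this is precisely why expressing everything in terms of the positions $p_j$ is convenient.
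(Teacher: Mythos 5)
Your proof is correct and follows essentially the same route as the paper's: check directly that the substitution $u \mapsto W[u]$ preserves and reflects both clauses (passing number, triangle-freeness) of the edge relation, then observe that the induced subgraph is therefore isomorphic to $\mathbb{G}$. Your bookkeeping via the first-occurrence positions $p_j$ is a slightly more explicit (and in fact slightly more careful) way of organizing the same calculation the paper performs.
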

\begin{proof}
Say~$|u| < |v|$. Then $|W[u]| < |W[v]|$. The passing number property is preserved: $v(|u|) = x_0$ iff $W[v](|W[u]|) = x_0$.  Let us show that the triangle-freeness condition is preserved. Suppose there is some~$i < |u|$ such that $u(i) = v(i) = x_0$. Then let~$j$ be the position of any occurrence of~$x_i$ in~$W$. We have $W[u](j) = u(i) = v(i) = W[v](j) = x_0$. Conversely, suppose there is some~$j < |W[u]|$ such that $W[u](j) = W[v](j) = x_0$. Then $W(j) = x_i$ for some~$i < |u|$. In particular, $W[u](j) = u(i)$ and $W[v](j) = v(i)$, so $u(i) = v(i) = x_0$.
\end{proof}

\begin{remark}
The previous lemma is precisely the reason why we consider variable words over~$\{0\}$ rather than binary words, and use the Carlson-Simpson lemma rather than Milliken's tree theorem. Indeed, identifying $\{0\}^{<\omega, 1}$ as $2^{<\omega} \setminus \{0, 00, 000, \dots\}$, an application of Milliken's tree theorem yields a strong infinite subtree~$S$. The issue that the graph $(S, E)$ is not universal in general. For example, if there is some~$n \in \omega$ such that every~$w \in S$ satisfies $w(n) = 1$, then the graph~$(S, E)$ is an anticlique, by the triangle-freeness condition.
\end{remark}
	
The following notions of envelope and embedding types are now standard in the study of big Ramsey degrees. We define the appropriate notions under the scope of variable words :

\begin{definition}
  Let $A$ an alphabet and $S$ a set of finite variable words over $A$. An \emph{envelope} of $S$ is a $(<\omega)$-variable word $W$ such that for all $s \in S$ there is a variable word $t$ such that $W[t] = s$. We say that $W$ is \emph{minimal} if there is no envelope of $S$ with fewer variables than $W$.
	\end{definition}
	
A simple computation shows that if~$W$ is a minimal envelope of~$S$, then
\begin{align}\label{cseq0}
\text{it has at most $2^{|S|}+|S|-1$ variables (see~\cite[Proposition 3.1]{hubicka2020big}).}
\end{align}
 In what follows, given two graphs~$F, G$, we write ${G \choose F}$ for the set of all embeddings from~$F$ to~$G$.
We write $F\cong_\varphi G$ iff $F$ is isomorphic to $G$ via $\varphi$.	
\begin{theorem}[$\RCA_0 + \forall k \CSLD {2^n+n-1} 1 k$]\label[theorem]{thm:henson-csl}
	Let $F$ be a finite triangle-free graph of size~$n$, there is an integer $\ell$ such that for any integer $k > 0$ and any finite coloring $\chi : \binom{\mathbb G}{F} \mapsto k$, there exists $f \in \binom{\mathbb G}{\mathbb G}$ such that $\chi$ attain at most $\ell$ colors on $\binom{f(\mathbb G)}{F}$.
	\end{theorem}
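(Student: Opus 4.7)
The plan is to adapt Hubi\v{c}ka's strategy \cite{hubicka2020big}, using our proof of the Higher-Order Carlson-Simpson Lemma as the black-box combinatorial core. Each embedding $\varphi : F \hookrightarrow \mathbb{G}$ can be analyzed through a minimal envelope: a finite variable word of dimension at most $m := 2^n + n - 1$ (by (\ref{cseq0})) together with the $n$-tuple of finite variable words internal to the envelope that produce the vertices of $\varphi(F)$ upon substitution. The \emph{embedding type} of $\varphi$ records this envelope dimension and internal tuple up to natural equivalence; since $m$ is bounded and the internal tuple is comprised of finite variable words of bounded length, the set of embedding types is finite. I would let $\ell$ be its cardinality and enumerate the types as $\tau_1, \dots, \tau_\ell$.

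Given the coloring $\chi : \binom{\mathbb{G}}{F} \to k$, I would construct $f$ by iteratively applying $\CSLD{m}{1}{k}$, once per type. Maintain an $\omega$-variable word $W_i$ (starting from the canonical word $x_0 x_1 x_2 \cdots$) with the invariant that inside the space $\{W_i[u] : u \in \{0\}^{<\omega, 1}\}$, every embedding of type $\tau_j$ with $j \leq i$ has a fixed $\chi$-color. To absorb type $\tau_{i+1}$, define an auxiliary $k$-coloring of $m$-variable words $V$ over $\{0\}$ by the $\chi$-color of the embedding of type $\tau_{i+1}$ whose envelope inside $W_i$ is $W_i[V]$ (with appropriate padding when the type's true envelope dimension is strictly less than $m$). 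Apply $\CSLD{m}{1}{k}$ to get an $\omega$-variable word $V^\star$ on which this auxiliary coloring is constant, and set $W_{i+1} := W_i[V^\star]$. Associativity of variable-word substitution ensures that any embedding of type $\tau_j$ with $j \leq i$ inside $\{W_{i+1}[u]\}$ is already an embedding of type $\tau_j$ inside $\{W_i[u]\}$, so monochromaticity on previous types is preserved across iterations.

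Setting $W := W_\ell$, \Cref{lem:subspace-still-universal} identifies $(\{W[u] : u \in \{0\}^{<\omega, 1}\}, E)$ with the universal triangle-free graph, providing the embedding $f : \mathbb{G} \hookrightarrow \mathbb{G}$ with image $f(\mathbb{G}) = \{W[u] : u \in \{0\}^{<\omega, 1}\}$. Every $\varphi \in \binom{f(\mathbb{G})}{F}$ has a unique embedding type $\tau_j$, on which $\chi$ takes a single value by construction; hence $\chi$ attains at most $\ell$ distinct colors on $\binom{f(\mathbb{G})}{F}$, as required.

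The main obstacle will be the envelope bookkeeping in the iteration step: one must verify that every embedding of $F$ inside $\{W_i[u] : u \in \{0\}^{<\omega, 1}\}$ is encoded by a genuine $m$-variable word $V$ via its envelope $W_i[V]$, so that the auxiliary coloring is well-defined, and that $V$ is determined by $\varphi$ and the type $\tau_{i+1}$, so that the coloring assigns a unique value. This reduces to minimality of envelopes combined with associativity of variable-word substitution, but the precise handling of padding when the true envelope dimension is smaller than $m$ must be set up carefully to fit the framework of $\CSLD{m}{1}{k}$.
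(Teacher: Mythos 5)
Your proposal reuses all the key ingredients of the paper's proof: the envelope bound $m = 2^n + n - 1$ from (\ref{cseq0}), the higher-order Carlson--Simpson lemma $\CSLD{m}{1}{\cdot}$ as the combinatorial core, and \Cref{lem:subspace-still-universal} to recover universality of the restricted space. But the decomposition is genuinely different: you apply $\CSLD{m}{1}{k}$ iteratively, once per embedding type, whereas the paper applies it a single time to a ``profile coloring'' $\h\chi$ on $\{0\}^{<\omega,m}$ that records, simultaneously for every pair $(T,\varphi)$ with $T\subseteq\h A^{\leq m}$ and $F\cong_\varphi u[T]$, the value $\chi(\varphi)$. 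With $\h k$ taken large enough to accommodate all such profiles, one application of $\CSLD{m}{1}{\h k}$ makes the \emph{entire} profile constant, and the argument for a given $\varphi\in\binom{f(\mathbb G)}{F}$ only needs the existence of \emph{some} $u\in\{0\}^{<\omega,m}$ and $T\subseteq\h A^{\leq m}$ with $u[T]=S$ where $W[S]=\varphi(F)$ --- no canonicity or minimality is required, and the identity $W[u][T]=W[u[T]]$ closes the argument.

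This is exactly where your iteration carries a gap you flag but do not close. First, the auxiliary coloring of $m$-variable words $V$ is only partial: for a fixed type $\tau_{i+1}$ with internal tuple $T$, the set $\{W_i[V[t]]:t\in T\}$ need not be an isomorphic copy of $F$ at all (edges in $\mathbb G$ depend on passing numbers and the triangle-freeness condition, which vary with $V$), so you must totalize with an extra dummy colour and argue that either case of the resulting homogeneous set is acceptable. Second, and more seriously, the invariant ``types $\tau_1,\ldots,\tau_i$ are already stabilized'' is carried across the substitution $W_{i+1}=W_i[V^\star]$ by appealing to ``associativity ensures any embedding of type $\tau_j$ inside $\{W_{i+1}[u]\}$ is already of type $\tau_j$ inside $\{W_i[u]\}$.'' Associativity gives $W_{i+1}[u]=W_i[V^\star[u]]$ and hence $\{W_{i+1}[u]\}\subseteq\{W_i[u]\}$, but it does \emph{not} by itself give that the minimal envelope and internal tuple of $\{u_1,\dots,u_n\}$ agree with those of $\{V^\star[u_1],\dots,V^\star[u_n]\}$; one has to show that passing to the subspace generated by $V^\star$ neither shrinks the minimal envelope dimension nor alters the internal tuple. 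That is plausibly true, but it is an additional combinatorial lemma about minimal envelopes under substitution by $\omega$-variable words, and without it the induction does not close. The single-application ``profile coloring'' route avoids this entirely because it never needs the embedding type to be a well-defined invariant of $\varphi$ across changing ambient subspaces.
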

	
\begin{proof}
Let $\h A$ be $\{0\}\cup\{x_0\}$.
Firstly, we note that $\chi$ give rise to a coloring $\h\chi\leq_T \chi$
of $\{0\}^{<\omega,2^n+n-1}$, where $\h\chi(u)$ records the coloring profile
of $\langle \varphi\ :\ T\subseteq \h A^{\leq 2^n+n-1}\text{ and }F\cong_\varphi u[T] \rangle$,
namely $$\h\chi(u) = \langle\chi(\varphi )\ :\ T\subseteq \h A^{\leq 2^n+n-1}\text{ and }F\cong_{\varphi} u[T]\rangle.$$
By $\msf{CSL}^{2^n+n-1}( 1, \h{k})$ (for some sufficiently large $\h k$),
there exists
an $\omega$-variable word $W$ (over $\{0\}$)
such that $\{W[u]: u\in \{0\}^{<\omega,2^n+n-1}\}$ is homogeneous for $\h\chi$.
By Lemma \ref{lem:subspace-still-universal}, the graph $\{W[t]: t\in\h A^{<\omega}\}$ is isomorphic to~$\mathbb G$.
It suffices to show that for any embedding
$\varphi$ of $F$ into the graph $\{W[t]: t\in\h A^{<\omega}\}$, we have
$\chi(\varphi)\in L$ where $L$ is the range of $\chi$, namely for every $u\in \{0\}^{<\omega,2^n+n-1} $,
let $\h u= W[u]$,
$$L=\{\chi(\varphi): T\subseteq  \h A^{\leq 2^n+n-1}\text{ and }F\cong_{\varphi} \h u[T]\}.$$
Let $S\subseteq \h A^{<\omega}$ so that
$W[S] = \varphi(F)$; which means  $|S| = n$.
By (\ref{cseq0}), there is a $u\in \{0\}^{<\omega,2^n+n-1}$ and $T\subseteq \h A^{\leq 2^n+n-1}$
such that $u[T] = S$.
Let $\h u = W[u]\in \{0\}^{<\omega,2^n+n-1}$.
Clearly, $F\cong_\varphi \h u[T]$. So $$\chi(\varphi)\in
\{\chi(\varphi): T\subseteq \h A^{\leq 2^n+n-1}\text{ and }F\cong_{\varphi } \h u[T]\}=L.$$
The last equality follows by homogeneity of $\h \chi$ on $\{W[u]: u\in \{0\}^{<\omega,2^n+n-1}\}\ni\h u$.
Thus we are done.
\end{proof}

\begin{corollary}[$\ACA^{+}_0$]
The triangle-free Henson graph admits finite big Ramsey degrees.
\end{corollary}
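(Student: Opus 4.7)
The plan is to simply assemble the pieces already established in the paper. Recall that by \Cref{thm:higher-order-csl-acap}, for every fixed $n \in \omega$, the system $\ACA^+_0$ proves $\forall k\, \forall \ell\, \CSLD{n}{k}{\ell}$. In particular, for each fixed $n$, the instance $\forall k\, \CSLD{2^n+n-1}{1}{k}$ needed by \Cref{thm:henson-csl} is a theorem of $\ACA^+_0$.

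Now let $F$ be an arbitrary finite triangle-free graph, say of size $n$. Applying \Cref{thm:henson-csl} (whose hypothesis $\forall k\, \CSLD{2^n+n-1}{1}{k}$ is available inside $\ACA^+_0$ by the previous paragraph), there exists an integer $\ell = \ell(F)$ such that for every $k > 0$ and every coloring $\chi : \binom{\mathbb G}{F} \to k$, one can find an embedding $f \in \binom{\mathbb G}{\mathbb G}$ such that $\chi$ takes at most $\ell$ values on $\binom{f(\mathbb G)}{F}$. Thus $F$ has a finite big Ramsey degree in $\mathbb G$, bounded by $\ell$.

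By \Cref{csth0}, the graph $\mathbb G$ is (computably isomorphic to) the universal triangle-free graph $\mathbb H_3$, and so the above statement transfers verbatim: every finite triangle-free graph $F$ has a finite big Ramsey degree in $\mathbb H_3$. This is exactly the statement that $\mathbb H_3$ admits finite big Ramsey degrees.

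The only subtlety worth flagging is that $n$ varies with $F$, so one must invoke $\CSLD{2^n+n-1}{1}{\cdot}$ separately for each $n$; but since \Cref{thm:higher-order-csl-acap} is schematic over $n$ in $\ACA^+_0$, this causes no difficulty. Hence there is no genuine obstacle: the proof is a direct combination of \Cref{thm:higher-order-csl-acap}, \Cref{csth0}, and \Cref{thm:henson-csl}.
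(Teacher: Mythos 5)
Your proof is correct and follows exactly the route the paper takes: the paper's own proof is the one-liner ``Immediate by Theorems~\ref{thm:henson-csl} and~\ref{thm:higher-order-csl-acap},'' and you unpack precisely those two ingredients (together with Theorem~\ref{csth0}, which the paper uses implicitly to identify $\mathbb G$ with $\mathbb H_3$). Your remark on the schematic nature of Theorem~\ref{thm:higher-order-csl-acap} over the meta-parameter $n$ correctly identifies the point the paper leaves tacit — the corollary, like the theorem it rests on, is to be read as a scheme indexed by the finite graph $F$.
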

\begin{proof}
Immediate by \Cref{thm:henson-csl} and \Cref{thm:higher-order-csl-acap}.
\end{proof}
	


%
%
%


\section*{Acknowledgement}

The authors are thankful to Natasha Dobrinen, Jan Hubička and Keita Yokoyama for interesting comments and discussions. The authors are also thankful for the anonymous referee for improvement suggestions.
Angles d'Auriac and Patey are partially supported by grant ANR ``ACTC'' \#ANR-19-CE48-0012-01. Lu Liu is supported by NSFC of Hunan Province with grant number 2022JJ30676.

\bibliographystyle{plain}
\bibliography{bibliography}

\end{document}